\newtheorem*{lemma}{Lemma A}
\newtheorem*{lemmaB}{Lemma B}
\newtheorem{tma}{Theorem}
\newtheorem{lma}[tma]{Lemma}
\newtheorem{prop}[tma]{Proposition}
\newtheorem{obs}[tma]{Remark}
\newcommand{\D}{\mathbb D}
\newcommand{\Z}{\mathbb Z}
\newcommand{\N}{\mathbb N}
\newcommand{\R}{\mathbb R}
\newcommand{\B}{\mathcal B}
\newcommand{\T}{\mathbb T}
\newcommand{\Di}{\mathcal D}
\newcommand{\A}{\mathcal A}
\newcommand{\F}{\mathcal F}
\newcommand{\G}{\mathcal G}
\newcommand{\V}{\mathcal V}
\newcommand{\Hi}{\mathsf{H^\infty}}
\newcommand{\Ima}{\operatorname{Im}}
\newcommand{\Rea}{\operatorname{Re}}
\newcommand{\Li}{\mathsf{L^\infty}}
\newcommand{\Lu}{\mathsf{L^1}}
\newcommand{\Bm}{\mathsf{BMO}}
\newcommand{\Mx}{\mathscr{M}}
\begin{document}

\title[Interpolating sequences for analytic selfmappings...]{Interpolating sequences for analytic selfmappings of the disc.}
\author[Pere Menal]{Pere Menal Ferrer}
\address{Pere Menal Ferrer, Departament de Matem\`{a}tiques, Universitat Aut\`{o}noma de Barcelona, 08193 Bellaterra, Barcelona, Spain}
\email{pmenal@mat.uab.cat}
\author[Nacho Monreal]{Nacho Monreal Galán}
\address{Nacho Monreal Galán, Departament de Matem\`{a}tiques, Universitat Aut\`{o}noma de Barcelona, 08193 Bellaterra, Barcelona, Spain}
\email{nachomg@mat.uab.es}
\author[Artur Nicolau]{Artur Nicolau}
\address{Artur Nicolau, Departament de Matem\`{a}tiques, Universitat Aut\`{o}noma de Barcelona, 08193 Bellaterra, Barcelona, Spain}
\email{artur@mat.uab.cat}

\thanks{2000 Mathematics Subject Classification: 30E05 (primary), 51M10 (secondary).\\
The first author is supported in part by grant 2005SGR00317. The
second and the third authors are supported in part by grants
MTM2005-00544, MTM2008-00145 and 2009SGR420.}

\begin{abstract}
Schwarz's Lemma leads to a natural interpolation problem for
analytic functions from the disc into itself. The corresponding
interpolating sequences are geometrically described in terms of a
certain hyperbolic density.
\end{abstract}

\maketitle

\section{Introduction.}
\noindent Let $\Hi$ be the algebra of bounded analytic functions
in the unit disc $\D$ of the complex plane.  Let
$$\B=\left\{f\in\Hi:\ \|f\|_\infty=\sup_{z\in\D}|f(z)|\leq1\right\}$$
be its closed unit ball. Given two sets of points
$\{z_1,\dots,z_N\}$ and $\{w_1,\dots,w_N\}$ in the unit disc, the
Nevanlinna-Pick interpolation problem consists in finding $f\in\B$
with $f(z_n)=w_n$, $n=1,\dots,N$. Nevanlinna and Pick
independently proved that the interpolation problem has a solution
if and only if the matrix
$$\left(\frac{1-w_i\overline w_j}{1-z_i\overline z_j}\right)_{i,j=1,\dots,N}$$
is positive semidefinite (\cite{Nevan},\cite{Pick}).  This is a
very nice result which is the root of a very active research area
(see for instance the book by J.\ Agler and J.\ McCarthy
\cite{AgMc} and the references there), with connections with other
topics. However, in some concrete situations, as the one we will
present, it is not easy to verify the matrix condition and one
needs to use more direct methods.

\medskip

\noindent Let $\beta(z,w)$ be the hyperbolic distance between two
points $z,w\in\D$.  A sequence of points $\{z_n\}$ in the unit
disc is called an interpolating sequence if for any bounded
sequence of values $\{w_n\}$ there exists a function $f\in\Hi$
with $f(z_n)=w_n$, $n=1,2,\dots$  A celebrated result of L.\
Carleson \cite{Carleson} asserts that $\{z_n\}$ is an
interpolating sequence if and only if $\{z_n\}$ is a separated
sequence and there exists a constant $M>0$ such that
$$\sum_{z_n\in Q}(1-|z_n|)\leq M\ell(Q)$$
for any Carleson box $Q$, that is, a box $Q$ of the form
\begin{equation}\label{carleson_box}
Q=\left\{r e^{i\theta}:\ 0<1-r<\ell(Q),\
|\theta-\theta_0|<\ell(Q)\right\}.
\end{equation}
A sequence of points $\{z_n\}$ in the unit disc is called a
separated sequence if $\displaystyle\inf_{n\neq
m}\beta(z_n,z_m)>0$. A standard application of the open mapping
theorem tells that whenever $\{z_n\}$ is an interpolating sequence
there exists a constant $C=C(\{z_n\})>1$ with the following
property: for any bounded sequence $\{w_n\}$ there exists
$f\in\Hi$, such that $f(z_n)=w_n$ for $n=1,2,\dots$ with
$\|f\|_\infty\leq C\displaystyle\sup_{n}|w_n|$.

\medskip

\noindent The main purpose of this paper is to consider a
situation which is intermediate between these two classical
results.  On one hand, as in Carleson's Theorem, we want to
interpolate a concrete and natural target space of values
$\{w_n\}$.  On the other, as in the Nevanlinna-Pick interpolation
problem, we want to do it by functions in the unit ball $\B$ of
$\Hi$.

\bigskip

\noindent Our discussion starts with Schwarz's Lemma (see for
example \cite[p. 1]{Garnett}). If $f\in\B$ the classical Schwarz's
Lemma tells that
$$\beta(f(z),f(w))\leq\beta(z,w),\ \ \text{for any }z,w\in\D.$$
So, for any sequence of points $\{z_n\}$ in the unit disc, the
corresponding values $w_n=f(z_n)$, $n=1,2,\dots$, satisfy
$\beta(w_n,w_m)\leq\beta(z_n,z_m)$, for $n,m=1,2\dots$ However,
given a sequence of points $\{z_n\}\subset\D$ we can not expect to
interpolate any sequence of values $\{w_n\}$ satisfying the above
compatibility condition unless $\{z_n\}$ reduces to two points.
Actually having equality in Schwarz's Lemma for two different
points forces the function $f\in\B$ to be an automorphism and
hence we can not expect to interpolate any further value.  In
other words, the trace space arising from Schwarz's Lemma is too
large and we are lead to the following notion.

\medskip

\noindent A sequence of distinct points $\{z_n\}$ in the unit disc
will be called an \emph{interpolating sequence for $\B$} if there
exists a constant $\varepsilon=\varepsilon (\{z_n\})>0$ such that
for any sequence of values $\{w_n\}\subset\D$ satisfying the
compatibility condition
\begin{equation}\label{compatibility_condition}
\beta(w_m,w_n)\leq \varepsilon \beta(z_m,z_n),\ \ \ n,m=1,2,\dots,
\end{equation}
there exists a function $f\in\B$ such that $f(z_n)=w_n,\
n=1,2,\dots$

\medskip

\noindent Observe that this notion is conformally invariant, that
is, if $\{z_n\}$ is an interpolating sequence for $\B$ then so is
$\{\tau(z_n)\}$ for any automorphism $\tau$ of the unit disc.
Moreover the constant in the definition verifies
$\varepsilon(\{\tau(z_n)\})=\varepsilon(\{z_n\})$. It is obvious
that if a separated sequence is an interpolating sequence for $\B$
it is also an interpolating sequence for $\Hi$. As we will see,
the converse is far from being true.

\medskip

\noindent Let $\Delta$ denote a hyperbolic disc in $\D$, that is,
$\Delta=\{w:\beta(w,z)<\rho\}$ for some $z\in\D$ and $\rho>0$. Let
also $A_h(\Delta)$ denote the hyperbolic area of $\Delta$. The
main result of the paper is the following geometric description of
interpolating sequences for $\B$.

\begin{tma}\label{main_theorem}
A sequence $\{z_n\}$ of distinct points in the unit disc is an
interpolating sequence for $\B$ if and only if the following two
conditions hold:
\begin{itemize}
\item[$(a)$] $\{z_n\}$ is the union of two separated sequences.
\item[$(b)$] There exist constants $M>0$ and $0<\alpha<1$ such
that for any hyperbolic disc $\Delta$ with $A_h(\Delta)\geq M$ we
have
\begin{equation*}
\#\{z_k : z_k\in\Delta\}\leq A_h(\Delta)^{\alpha}.
\end{equation*}
\end{itemize}
\end{tma}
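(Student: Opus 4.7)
I split the argument into necessity and sufficiency. Throughout I use the conformal invariance noted in the excerpt: any question about a cluster of the $\{z_n\}$ can be reduced, via a hyperbolic isometry, to a cluster at the origin, where Schwarz--Pick and the Nevanlinna--Pick matrix become easiest to manipulate.

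\textbf{Necessity.} Assume $\{z_n\}$ is $\B$-interpolating with constant $\varepsilon$. For (a), I argue by contradiction: if the sequence is not a union of two separated sequences, there exist triples $(z_{n_1},z_{n_2},z_{n_3})$ with all three pairwise hyperbolic distances arbitrarily small. After Möbius normalisation the three points lie in a tiny Euclidean neighbourhood of $0$. I then choose values $w_{n_i}\in\D$ satisfying $\beta(w_{n_i},w_{n_j})\le\varepsilon\,\beta(z_{n_i},z_{n_j})$ whose $3\times 3$ Nevanlinna--Pick matrix fails to be positive semi-definite: intuitively, three nearby points detect the $2$-jet of any $f\in\B$ while pairwise compatibility only constrains the $1$-jet, leaving genuine room for an obstruction. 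Setting $w_m=0$ for the remaining indices preserves~(\ref{compatibility_condition}) since the residual distances $\beta(z_{n_i},z_m)$ are bounded below, and Nevanlinna--Pick then excludes any interpolant in $\B$. For (b), let $\Delta$ have hyperbolic area $A$ and contain $N$ of the $z_n$, normalised to be centred at $0$. By~(\ref{compatibility_condition}) the corresponding $\{w_n\}$ lie in a hyperbolic disc of diameter $\le\varepsilon\,\mathrm{diam}_h(\Delta)$, and hence of hyperbolic area comparable to $A^{2\varepsilon}$ for large $A$. Choosing the $w_n$ to be maximally separated inside that smaller disc (spacing $\varepsilon$ times the ambient spacing of the $z_n$), a Pick-matrix/volume comparison yields $N\le C\,A^{2\varepsilon}$, giving (b) with $\alpha=2\varepsilon<1$ (assuming $\varepsilon<1/2$).

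\textbf{Sufficiency.} Assume (a) and (b). By (a), write $\{z_n\}=\{a_k\}\cup\{b_k\}$ as the union of two separated sequences. A Whitney-type decomposition of any Carleson box $Q$ into hyperbolic sub-discs of bounded area, combined with (b) applied at the scales where $A_h(\Delta)\ge M$, gives the Carleson condition $\sum_{z_n\in Q}(1-|z_n|)\le M\ell(Q)$ for each subsequence; hence both $\{a_k\}$ and $\{b_k\}$ are $\Hi$-interpolating. Given compatible values $\{w_n\}$, I build $f\in\B$ in two stages. First, Earl's explicit Blaschke-product construction produces $g\in\B$ matching $\{w_{a_k}\}$ at $\{a_k\}$, where the subcriticality $\alpha<1$ in (b) is what forces $g$ into the \emph{unit ball} rather than merely into $\Hi$ with bounded norm. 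Second, for each $b_k$ hyperbolically close to some $a_k$, Schwarz--Pick applied to $g$ together with the compatibility $\beta(w_{a_k},w_{b_k})\le\varepsilon\,\beta(a_k,b_k)$ controls the discrepancy $g(b_k)-w_{b_k}$ by a quantity of order $\varepsilon$; I then correct by a second Blaschke-product interpolation absorbing these small errors while staying inside $\B$. Isolated $b_k$ far from all $a_k$ are treated by classical Carleson interpolation.

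\textbf{Main obstacle.} The heart of the difficulty is producing an interpolant in the closed unit ball $\B$, not merely in $\Hi$ with controlled norm: standard interpolation only yields $\|f\|_\infty\le C\sup_n|w_n|$, and the geometric compatibility~(\ref{compatibility_condition}) and the subcritical density~(b) must be used \emph{jointly} to confine $f$ inside $\D$. I expect the quantitative Earl-style step in the sufficiency proof -- verifying that each Blaschke-product interpolant genuinely lies in $\B$ rather than in $C\cdot\B$, and that the subsequent correction does not push the total outside the unit ball -- to be the technical core of the argument.
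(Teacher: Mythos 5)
Your write-up correctly identifies the right obstacle -- producing an interpolant that lies in the \emph{closed unit ball} rather than in $C\cdot\B$ -- and the overall architecture you sketch for the sufficiency (first handle one separated subsequence, then correct on the second) matches the paper's outline. But neither half of your argument, as written, closes.

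\textbf{Sufficiency.} The Earl-type step is where the proposal breaks. Earl's construction (and Carleson's) produce $g$ with $\|g\|_\infty\le C(\delta)\sup_n|w_n|$ for a constant $C(\delta)>1$ depending on the separation constant; it does not produce $g\in\B$. In the present problem the $w_n$ are only constrained by the hyperbolic compatibility condition, so as the $z_n$ spread out the $w_n$ can drift toward $\partial\D$, making $\sup_n|w_n|$ arbitrarily close to $1$; then $C(\delta)\sup_n|w_n|>1$ and Earl gives nothing inside $\B$. Your assertion that ``the subcriticality $\alpha<1$ in (b) is what forces $g$ into the unit ball'' is precisely the claim that needs an argument, and no known Blaschke-product construction supplies it: the subcritical density does not feed into Earl's estimates in a way that shrinks the constant to $1$. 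The paper sidesteps this entirely. It builds a non-analytic interpolant $\varphi:\R^2_+\to\D$ (whose range is literally in $\D$) satisfying the Carleson condition $\int_Q|\nabla\varphi|(1-|\varphi|^2)^{-1}\,dA\lesssim\varepsilon\,\ell(Q)$; this needs the combinatorial covering Lemma~B of \cite{BN}, a tree-Lipschitz extension, and a genuinely non-trivial averaging over translates via a hyperbolic barycenter (Proposition~\ref{convex_ineq}). The final analytic solution is $f=\varphi-\tfrac12 B\,E(1-|\varphi|^2)\,b$ where $b$ solves a $\bar\partial$-problem with bounds proportional to $\varepsilon$; it is the outer factor $E(1-|\varphi|^2)$, together with the bound $\|b\|_\infty\le C\varepsilon$, that yields $|f(x)|\le|\varphi(x)|+\tfrac12 C\varepsilon(1-|\varphi(x)|^2)\le1$ on the boundary. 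No Blaschke-product-only argument captures this pointwise-on-the-boundary control.

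\textbf{Necessity.} You route both (a) and (b) through the Nevanlinna--Pick matrix, which the paper explicitly declines to do (``it is not easy to verify the matrix condition''); as written, your argument at both steps is an assertion rather than a proof. For (a), ``whose $3\times3$ Nevanlinna--Pick matrix fails to be positive semi-definite'' is stated, not shown; the elementary route (which the paper takes) is to normalize $z_1=0$, take $w_1=w_3=0$, $w_2=\varepsilon\beta(z_1,z_2)$, and note that any interpolant is dominated by $|z||(z-z_3)/(1-\bar z_3 z)|$, which caps $|f'|\lesssim\rho$ near $0$ while the interpolation forces $|f'|\gtrsim\varepsilon$ somewhere on the segment $[0,z_2]$, giving $\rho\gtrsim\varepsilon$. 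For (b), the ``Pick-matrix/volume comparison'' is not an argument: confining the $w_n$ to a small hyperbolic disc does not by itself obstruct positive semi-definiteness, and you do not indicate how to extract the exponent $\alpha<1$ from the matrix. The paper's proof is a concrete construction: it splits the points at a fixed hyperbolic distance shell into two angular families, assigns them values very close to $\pm1$ (with gap calibrated to the compatibility constant), and then uses the weak-type $(1,1)$ estimate for the non-tangential maximal function of $(1-f)^{-1}$ (Lemma~\ref{projection}) to bound the boundary projection of one family, from which the cardinality bound follows; a dyadic covering Lemma~\ref{sum_good_squares} then upgrades the ``one-half-of-each-arc'' estimate to the full density condition. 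None of this is visible from the NP-matrix side.

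In short: the necessity sketch substitutes the (hard-to-verify) matrix condition for the direct arguments and never exhibits the failing matrix; the sufficiency sketch relies on Earl's construction landing in $\B$, which it does not, and which is exactly the difficulty the paper's $\bar\partial$-plus-outer-function machinery is designed to overcome.
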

\noindent The density condition $(b)$ is the essential one. Let us
discuss its geometrical meaning. If the sequence $\{z_n\}$ was
merely separated we would have that there exists a constant $M>0$
such that for any hyperbolic disc $\Delta$, the estimate
$$\#\left\{z_k : z_k\in\Delta\right\}\leq M (A_h(\Delta)+1)$$
holds.  Hence interpolating sequences for $\B$ are exponentially
more sparse than separated sequences.  The density condition $b)$
can also be stated in the following way: there exist constants
$M>0$ and $0<\alpha<1$ such that for any Carleson box of the form
(\ref{carleson_box}) we have
\begin{equation}\label{density_condition}
\#\left\{z_k\in Q :
2^{-n-1}\ell(Q)<1-|z_k|\leq2^{-n}\ell(Q)\right\}\leq M2^{\alpha n}
\end{equation}
for any $n=1,2,\dots$

\medskip

\noindent Let us briefly describe the connection of this result
with the characterization of interpolating sequences in the Bloch
space. An analytic function f in $\D$ is in the Bloch space if
$$\sup(1-|z|^2)|f'(z)|<\infty,$$
where the supremum is taken over all $z\in\D$. A sequence of
points $\{z_n\}$ in the unit disc is an interpolating sequence for
the Bloch space if, whenever a corresponding sequence of complex
numbers $\{w_n\}$ satisfies
$$|w_n-w_n|\leq C\beta(z_n,z_m),\ \ n,m=1,2\dots$$
for some constant $C>0$, there exists a function $f$ in the Bloch
space solving the interpolation problem $f(z_n)=w_n$, for
$n=1,2,\dots$ It was proved in \cite{BN} that the conditions in
Theorem \ref{main_theorem} also characterize interpolating
sequences for the Bloch space. The nice book by K.\ Seip
\cite{Seip} contains a discussion of this result, its relations to
interpolation in other spaces, as well as several conditions
equivalent to the density condition $(b)$. Although the proof of
Theorem 1 contains many of the ideas of \cite{BN}, it is worth
mentioning that we can not deduce our result from the one for
Bloch functions. Roughly speaking, euclidean distance and the
interpolation constant $C$ in the case of the Bloch space are
replaced in this work by hyperbolic distance and the interpolation
constant $\varepsilon$ in (\ref{compatibility_condition}). So the
problem we treat is analogue to the one in \cite{BN}. However the
fact that hyperbolic distance now appears in both sides of
(\ref{compatibility_condition}), as well as that we are in a non
linear setting, makes the problem more difficult.

\bigskip

\noindent Let us now explain the main ideas in the proof. The
necessity is proven by taking values $\{w_n\}$ for which
$\beta(w_n,w_m)/\beta(z_n,z_m)$ is maximal and applying standard
techniques involving the non-tangential maximal function. The
proof of the sufficiency is considerably harder. Given a sequence
of points $\{z_n\}$ satisfying both conditions $(a)$ and $(b)$ and
a sequence of values $\{w_n\}\subset\D$ with
$\beta(w_n,w_m)\leq\varepsilon\beta(z_n,z_m)$ we have to find a
function $f\in\B$ with $f(z_n)=w_n$.  The main step of the proof
is the construction of a non-analytic function $\varphi$ in the
unit disc with $\varphi(z_n)=w_n$ such that
\begin{equation}\label{carleson_condition}
\int_Q\frac{|\nabla\varphi(z)|}{1-|\varphi(z)|^2}dA(z)\leq
C\varepsilon\ell(Q),
\end{equation}
for any Carleson box $Q$ of the form (\ref{carleson_box}).  Once
this is done, standard techniques involving $\Bm$ and
$\bar\partial$-equations provide a solution of the interpolation
problem in the unit ball of $\Hi$. The construction of the
function $\varphi$ is made in two different steps.  Using a
certain collection of dyadic Carleson boxes, we first construct a
non-analytic interpolating function $\varphi_0$.  It is more
convenient to work in the upper half plane $\R^2_+$ than in the
unit disc $\D$. Let us assume that $\{z_n\}$ is contained in the
unit square $[0,1)^2$. Let $I^{0}=[0,1)$ be the unit interval and
for $n=1,2,\dots$ consider the $2^n$ dyadic intervals
$I^n_j=[(j-1)/2^n,j/2^n)$ with $j=1,\dots,2^n$. Given an interval
$I$ of the line, let $Q(I)=\{x+iy\in\R^2_+:x\in I, 0<y\leq|I|\}$
be its associated Carleson box and $T(I)=\{x+iy\in Q(I):y>|I|/2\}$
its top part. We will also denote by $z(I)$ the center of $T(I)$,
that is, if $I=[a,b)$ then $z(I)=(a+b)/2+i3(b-a)/4$.

\medskip

\noindent Let $\A$ be the collection of dyadic intervals $I$ such
that $T(I)\cap\{z_n\}\neq\emptyset$. For the sake of simplicity,
let us assume that $\{z_n\}$ is a separated sequence and in fact,
that each $T(I)$ contains at most one single point of the sequence
$\{z_n\}$. The construction of the function $\varphi_0$ is based
on an useful combinatorial result which is proved in \cite{BN}. It
consists on considering a bigger family of dyadic intervals
$\G\supseteq\A$ and the function $\varphi_0$ will be constructed
by, roughly speaking, moving in the vertical direction at most
$\varepsilon$ hyperbolic units in each $T(I)$ for $I\in\G$ . The
family $\G$ verifies two properties that point to two opposite
directions. On the one hand the family $\G$ should be large to
guarantee that $\varphi_0$ reaches the corresponding values $w_n$
in the points $z_n\in T(I)$, $I\in\A$, as soon as these values
satisfy the compatibility condition
(\ref{compatibility_condition}). On the other hand, since
$\partial_y\varphi_0(z)$ will vanish in $\R^2_+\setminus\cup
T(I)$, where the union is taken over all $I\in\G$, the family $\G$
should be small enough to guarantee
$$\int_Q\frac{|\partial_y\varphi_0(z)|}{1-|\varphi_0(z)|^2}dA(z)\leq C\varepsilon\ell(Q)$$
for any Carleson box $Q\subset\R^2_+$. However, since there is no
control on the jumps of the function $\varphi_0$ on the vertical
sides of $Q(I)$, $I\in\G$, we can not expect that
$\partial_x\varphi_0$ satisfies an analogue estimate. To overcome
this difficulty, in the euclidean setting we would produce a
smooth interpolating function by averaging the functions
$\varphi_t$ corresponding to different sequences
$\{z_n+t:n=1,2,\dots\},$ $t\in[0,1)$, that is, by taking
$$\varphi(z)=\int_0^1\varphi_t(z+t)dt,\ \ z\in\R^2_+,$$
where $\varphi_t$ are constructed as explained above and verify
$\varphi_t(z_n+t)=w_n$ (see \cite[p. 76]{Seip}). However, in our
hyperbolic setting this does not make sense and the averaging
procedure is much more subtle.

\medskip

\noindent Given a point $z\in\R^2_+$, we would like to define
$\varphi(z)$ as the \emph{center of mass} of the set of points
$\{\varphi_t(z+t) : t \in [0,1)\}$, suitably weighted, in such a
way that for any pair of points $z,w\in\R^2_+$ the following
inequality holds
\begin{equation}\label{integral_ineq2}
\beta(\varphi(z),\varphi(w))\leq\int_0^1\beta(\varphi_t(z+t),\varphi_t(w+t))dt.
\end{equation}
This inequality and standard arguments will lead to estimate
(\ref{carleson_condition}). There are several possible notions of
\emph{center of mass} in hyperbolic space, and which one is
preferable depends on our specific purpose. Let us discuss briefly
two of them. One possibility consists in defining the center of
mass of a finite number of point masses inductively. The main
difficulty of this approach is to determine the representative
mass of the center of the masses (the naive attempt of taking the
sum of the masses lead to a definition that depends on the
partition used in the inductive step). See \cite{Gal} for a full
exposition of this. With this definition, the center of mass of
three points with equal mass coincides with the barycenter of the
triangle that they define; this fact can be used to prove that
this definition can not yield an estimate of the form of
(\ref{integral_ineq2}). Another approach is to define the center
of mass of a finite measure $\mu$ as the unique minimum of the
function
$$H(x)=\int_{\mathbb{R}_+^2} \beta(x,y)^2 d\mu(y).$$
As we will see in Section \ref{sect_center}, with this definition,
if we take $\mu$ as the pushforward measure of the Lebesgue
measure on $[0,1)$ by the map $t\mapsto \varphi_t(z+t)$, which
simply corresponds to assign a weight to the points
$\varphi_t(z+t)$, then inequality (\ref{integral_ineq2}) is
satisfied. The proof of this fact is a straightforward
generalization of a similar result stated in \cite{LPS}.

\bigskip

\noindent The paper is organized as follows.  The necessity in
Theorem \ref{main_theorem} is proved in Section
\ref{sect_necessity}. Section \ref{sect_center} is devoted to the
construction of the suitable center of mass of a certain measure
in the hyperbolic space. Since these notions have been considered
in the literature in general spaces of negative curvature, we will
present the results in the general context of Hadamard spaces.
This construction holds in any space of negative curvature.
Section \ref{sect_sufficiency} contains the proof of the
sufficiency in Theorem \ref{main_theorem}, and is the most
technical part of the paper.

\medskip

\noindent The letters $C,C_1,C_2,\dots$ will denote absolute
constants while $C(\delta)$ will denote a constant depending on
$\delta$.

\bigskip

\noindent It is a pleasure to thank Eero Saksman and María José
González for many helpful discussions. We are also grateful to the
referee for his/her careful reading of the paper.

\section{Necessity.}\label{sect_necessity}

\noindent We use the following normalization of the hyperbolic
distance
$$\beta(z,w)=\log_2\frac{1+|\frac{z-w}{1-\overline{w}z}|}{1-|\frac{z-w}{1-\overline{w}z}|},$$
where $z,w\in\D$, because it fits conveniently with the dyadic
decomposition of the disc, which will be a basic tool.  Let
$$I^{n}_j=[e^{i\pi(j-1)/2^n},e^{i\pi j/2^n}),\ \ \text{with }n\in\N\ \text{and }j=1,\dots,2^n,$$
be the standard collection of dyadic arcs on the unit circle $\T$
so that $|I^n_j|=2^{-n}$, where $|\cdot|$ is the normalized linear
measure on $\T$.  Given a dyadic arc $I\subset\T$ the
corresponding $Q(I)$ is called a dyadic Carleson square, and
$\{Q(I^n_j):j=1,\dots,2^n,n\in\N\}$ is the dyadic decomposition of
$\D$. Given a dyadic arc $I_j^n$ we will say that
$z(I_j^n)=(1-|I^n_j|)\exp(i\pi(j+1)/2^{n+1})$ is the center of the
Carleson square $Q(I^n_j)$. It is easy to deduce that if $I,J$ are
dyadic arcs, $I\subseteq J$ and $|I|=2^{-k}|J|$, for some
$k\in\N$, then $|\beta(z(I),z(J))-k|\leq C$, where $C$ is a
universal constant independent of $I$, $J$ and $k$.

\medskip

\noindent First of all let us show the equivalence between
condition $(b)$ in Theorem \ref{main_theorem} and
(\ref{density_condition}). Assume $(b)$ is verified, by conformal
invariance we may take $Q=\D$ in (\ref{density_condition}). Now
$$A_h(\Delta)=C\int_\Delta\frac{1}{(1-|z|^2)^2}dA(z),$$
where $C>0$ is a universal constant and $dA(z)$ is the euclidean
area measure (see \cite{Anderson}). Then it is easy to show that
for any $n=1,2,\dots$ we have that $A_h(D(0,1-2^{-n}))=C_12^{n}$,
where $C_1$ is a universal constant. So condition $(b)$ implies
that there exists $M_1>0$ such that
$$\#\left\{z_k\in \D :
2^{-n-1}<1-|z_k|\leq2^{-n}\right\}\leq M_12^{\alpha n}.$$
Conversely, assume (\ref{density_condition}) holds and let
$\Delta$ be an euclidean disc centered at the origin and with
euclidean radius $r<1$. Then $A_h(\Delta)=C_2(1-r)^{-1}$, where
$C_2$ is an absolute constant. Pick $n\in\N$ such that
$2^{-n-1}<1-r\leq 2^{-n}$, then applying (\ref{density_condition})
to the sets $\{z_k\in\D:2^{-j-1}<1-|z_k|\leq 2^{-j}\}$ for
$j=0,1,\dots,n$ and summing in $j$ we deduce that
$\#\{z_k:z_k\in\Delta\}\leq M_2 2^{\alpha n}\leq
M_3A_h(\Delta)^{\alpha}$.

\medskip

\noindent With the normalization of the hyperbolic distance given
above, we may prove that estimate (\ref{density_condition}), and
equivalently condition $(b)$ in Theorem \ref{main_theorem}, can be
expressed in the following way: There exist constants $M>0$ and
$0<\alpha<1$ such that for any $z\in\D$
\begin{equation}\label{density_condition2}
\#\{z_k\ :\ \beta(z,z_k)\leq n\}\leq M2^{\alpha n},\ \ n=1,2,\dots
\end{equation}

\subsection{Union of two separated sequences}\label{sect_neces_2seq}

We start with the easiest part of the necessity which is the
separation condition $(a)$ in Theorem \ref{main_theorem}.  This
condition appears because our target space is defined in terms of
first differences, while Cauchy's formula tells that
$(1-|z|)^n|f^{(n)}(z)|\leq C(n)\|f\|_\infty$ for any $z\in\D,\
n=1,2,...$  So if three points of the sequence $\{z_n\}$ were in a
small hyperbolic disc, the corresponding values should satisfy a
more restrictive smoothness condition which could be expressed in
terms of second differences. More concretely, we will show that
there exists $\delta>0$ such that any hyperbolic disc of radius
$\delta$ has at most two points of the sequence.  Let
$z_1,z_2,z_3$ be three points of the sequence $\{z_n\}$ with
$$\max\{\beta(z_1,z_2),\beta(z_2,z_3)\}\leq\beta(z_1,z_3)=\rho.$$
We will show that $\rho$ is bounded below.  By conformal
invariance we may assume $z_1=0$.  Now take the values $w_1=w_3=0$
and $w_2=\varepsilon\beta(z_1,z_2)$, where
$\varepsilon=\varepsilon(\{z_n\})$ is the interpolation constant,
that is, the quantifier appearing in
(\ref{compatibility_condition}). It is clear that these values
satisfy the compatibility condition
(\ref{compatibility_condition}), so then there exists a function
$f\in\B$ with $f(0)=f(z_3)=0$ and
$f(z_2)=\varepsilon\beta(0,z_2)$. Hence there is a point $\zeta$
in the radius from 0 to $z_2$ with
$(1-|\zeta|)|f'(\zeta)|>C_1\varepsilon$.  On the other hand since
$$|f(z)|\leq|z|\left|\frac{z-z_3}{1-\bar z_3z}\right|,\ \ z\in\D,$$
we have that
$$|f'(z)|\leq C_2\rho\ \ \text{if }|z|<\rho.$$
So we deduce that there exists a constant $C_3>0$ such that
$\rho\geq C_3\varepsilon$. Hence we have proved that $\{z_n\}$ can
be written as $\{z_n\}=\{z^{(1)}_n\}\cup\{z_n^{(2)}\}$, where both
sequences are separated.


\subsection{Density condition}\label{sect_neces_density}

\subsubsection{Some lemmas}

The proof of the density condition $(b)$ is based on the following
two auxiliary results.  The first one is a convenient version of a
well known estimate of the non-tangential maximal function. The
second one is an elementary combinatorial statement. Fix $M>1$.
Given a set $E\subseteq\D$, let $\Pi(E)=\Pi_M(E)$ denote the set
of points $\xi\in\T$ such that the Stolz angle
$\Gamma_M(\xi)=\{z\in\D:|z-\xi|<M(1-|z|)\}$ intersects $E$.

\begin{lma}\label{projection}
There is a constant $C(M)>0$ such that for any $f\in\B$ with
$f(0)=0$ and any $\eta>0$ we have
$$\big|\Pi\left(\left\{z\in\D:|f(z)-1|<\eta\right\}\right)\big|\leq C\eta.$$
\end{lma}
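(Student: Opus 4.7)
The plan is to rewrite the set $\{|f-1|<\eta\}$ as a super-level set of a single analytic function with positive real part, and then to invoke the classical weak-$L^1$ estimate for its non-tangential maximal function.

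First I would introduce $h(z):=1/(1-f(z))$. Since $f\in\B$ satisfies $f(0)=0$, the maximum principle forces $|f|<1$ strictly on $\D$, so $1-f$ has no zeros and $h$ is analytic on $\D$ with $h(0)=1$. Because $f$ maps $\D$ into the closed disc, $1-f$ takes values in $\overline{B(1,1)}$, and the M\"obius map $w\mapsto 1/w$ sends $\overline{B(1,1)}\setminus\{0\}$ onto the half-plane $\{\Rea\, w\ge 1/2\}$; hence $\Rea\, h\ge 1/2$ on $\D$, and in particular $h$ has positive real part. The key identity
$$\{z\in\D:|f(z)-1|<\eta\}=\{z\in\D:|h(z)|>1/\eta\}$$
rewrites the set of the lemma as a super-level set of $|h|$, so that
$$\Pi\bigl(\{|f-1|<\eta\}\bigr)=\{\xi\in\T:N_M(h)(\xi)>1/\eta\},$$
where $N_M(h)(\xi):=\sup_{z\in\Gamma_M(\xi)}|h(z)|$ is the non-tangential maximal function of $h$.

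Next I would apply the following classical weak-$L^1$ bound: there exists $C(M)$ such that for every analytic $h$ on $\D$ with positive real part,
$$\bigl|\{\xi\in\T:N_M(h)(\xi)>\lambda\}\bigr|\le C(M)\,\frac{\Rea\, h(0)}{\lambda},\qquad \lambda>0.$$
This follows from Herglotz's representation $h=P[\mu]+iQ[\mu]+ic$ (with $\mu\ge 0$, $\mu(\T)=\Rea\, h(0)$, and here $c=\Ima\, h(0)=0$), combined with the weak-$(1,1)$ bound for the Hardy-Littlewood maximal function (which dominates $N_M(P[\mu])$ via $N_M(P[\mu])\le C(M)M_{HL}(\mu)$) and Kolmogorov's weak-$L^1$ theorem for the conjugate Poisson integral $Q[\mu]$, upgraded to the non-tangential maximal function. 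Specializing to $\Rea\, h(0)=1$ and $\lambda=1/\eta$ yields the desired bound.

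The main subtlety is that one genuinely needs the \emph{weak-$L^1$} estimate for the non-tangential maximal function of $Q[\mu]$, and not merely the $H^p$ bound for $p<1$ that is immediate from the positive real part condition. A Chebyshev argument starting from $h\in H^p$ would give only $|\Pi(\cdot)|\le C_p\eta^p$ with $p<1$, which falls short of the linear scaling in $\eta$ that the lemma requires; the sharper Kolmogorov-type estimate on the non-tangential maximum is exactly what restores the correct power of $\eta$.
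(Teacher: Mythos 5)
Your proposal follows essentially the same route as the paper: both set $g = 1/(1-f)$, observe that it is analytic with positive real part and $g(0)=1$, rewrite the projection as the super-level set $\{\xi : \Mx g(\xi) > 1/\eta\}$, and invoke the weak-$L^1$ estimate for the non-tangential maximal function of a positive-real-part analytic function. You unpack the weak-type bound (Herglotz representation, Hardy--Littlewood maximal function, Kolmogorov's estimate for the conjugate function) that the paper simply cites as known, but the underlying argument is the same.
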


\begin{proof}
Consider the function $g=(1-f)^{-1}$ which maps the disc into the
right half plane.  Let $\Mx g$ be the non-tangential maximal
function of $g$, that is
$$\Mx g(\xi)=\sup\big\{|g(z)|:z\in\Gamma(\xi),\xi\in\partial\D\big\},$$
where $\Gamma(\xi)$ is the Stolz angle with vertex at $\xi$. Since
$g$ has positive real part, then $\Mx g$ satisfies the weak type
estimate
$$\left|\left\{\xi\in\T : |\Mx g(\xi)|>\lambda\right\}\right|\leq\frac{C}{\lambda}$$
for any $\lambda>0$.  Here $C$ is a universal constant independent
of $g$. Since
$$\Pi\left(\left\{z\in\D:|f(z)-1|<\eta\right\}\right)=\left\{\xi\in\T : \Mx g(\xi)>\frac{1}{\eta}\right\}$$
the proof is completed.
\end{proof}

\begin{lma}\label{sum_good_squares}
Let $M>0$ and $0<\alpha<1$ be fixed constants.  Let $\A$ be a
collection of dyadic arcs of the unit circle $\T$. Assume that for
any dyadic arc $I$ and any positive integer $n$ at least one of
its two halves, which we denote by $\widetilde{I}$, satisfies
\begin{equation}\label{good_square_estimate}
\#\big\{J\in\A : J\subset\widetilde{I},\ |J|=2^{-n}|I|\big\}\leq M
2^{n\alpha}.
\end{equation} Then for any dyadic arc
$I$ and any positive integer $n$ we have
$$\#\big\{J\in\A : J\subset I,\ |J|=2^{-n}|I|\big\}\leq \frac{2M}{1-2^{-\alpha}} 2^{n\alpha}.$$
\end{lma}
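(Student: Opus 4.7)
The natural strategy is induction on $n$. Set
$$a(I,n):=\#\{J\in\A:J\subset I,\ |J|=2^{-n}|I|\},\qquad C_\alpha:=\frac{2M}{1-2^{-\alpha}},$$
and aim to show $a(I,n)\leq C_\alpha 2^{n\alpha}$ uniformly in $I$. The base case $n=1$ is immediate: a dyadic arc has only two halves, so $a(I,1)\leq 2\leq C_\alpha 2^\alpha$ in the only interesting regime $M\geq 1$.

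For the inductive step, fix a dyadic arc $I$ and $n\geq 2$, and apply the hypothesis to the pair $(I,n)$ to pick a half $\widetilde{I}$ of $I$ with
$$\#\{J\in\A:J\subset\widetilde{I},\ |J|=2^{-n}|I|\}\leq M 2^{n\alpha}.$$
Let $I'$ denote the other half. Every $J\in\A$ with $J\subset I$ and $|J|=2^{-n}|I|$ lies either in $\widetilde{I}$ or in $I'$, and in the latter case $|J|=2^{-(n-1)}|I'|$, so
$$a(I,n)\leq M 2^{n\alpha}+a(I',n-1).$$
Feeding in the inductive hypothesis $a(I',n-1)\leq C_\alpha 2^{(n-1)\alpha}$, it remains to check that
$$M 2^{n\alpha}+C_\alpha 2^{(n-1)\alpha}\leq C_\alpha 2^{n\alpha},$$
which rearranges to $M\leq C_\alpha(1-2^{-\alpha})=2M$, trivially true. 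This closes the induction.

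The whole argument is, in essence, a telescoping geometric sum: at each level the "good" half contributes at most $M 2^{k\alpha}$, and the "bad" half is passed to the next level, producing a chain $I\supset I'\supset I''\supset\cdots$ whose contributions sum to a geometric series of ratio $2^\alpha$. There is no real obstacle; the only delicate point is the calibration of the constant $C_\alpha$, which is dictated by the inductive step.
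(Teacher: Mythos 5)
Your induction on $n$ is correct and amounts to the same underlying argument as the paper's: at each scale, peel off the ``good'' half (cost at most $M2^{k\alpha}$), pass the other half to the next level, and the contributions sum as a geometric series. The paper makes the resulting chain $I\supset I'\supset I''\supset\cdots$ explicit, uses a stopping rule, and sums directly, whereas you package the same recursion as a clean induction; the base-case caveat $M\geq 1$ that you flag is also tacitly needed in the paper's version and is harmless since one may always replace $M$ by $\max(M,1)$.
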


\begin{proof}
Fix a dyadic arc $I$. By hypothesis at least one of its two
halves, denoted by $I_1$, verifies estimate
(\ref{good_square_estimate}). If the other half also verifies the
estimate we denote it by $I_2$ and we stop the decomposition. If
not, by hypothesis at least one of its two halves, denoted now by
$I_2$, must verify that
\begin{equation*}
\#\big\{J\in\A : J\subset I_2,\ |J|=2^{-n}|I|\big\}\leq M
2^{(n-1)\alpha}.
\end{equation*}
Repeating this process at most $n$ times we cover $I$ by $m\leq
n+1$ pairwise disjoint dyadic intervals $\{I_j:j=1,\dots,m\}$ with
$|I_j|=2^{-j}|I|$ if $j<m$ and $|I_m|=2^{1-m}|I|$ if $m\leq n$, or
$|I_m|=2^{-n}|I|$ if $m=n+1$. Also these intervals satisfy that
$$\#\left\{J\in\A:J\subset I_j,\ |J|=2^{-n}|I|\right\}\leq 2M2^{(n-j)\alpha},\ \ j=1,\dots,m.$$
Hence,
$$\#\left\{J\in\A:J\subset I,\ |J|=2^{-n}|I|\right\}\leq 2M\sum_{j=1}^m2^{(n-j)\alpha}.$$
\end{proof}

\subsubsection{Necessity of the density condition.}
Let $\{z_n\}$ be an interpolating sequence for $\B$ and let
$\varepsilon=\varepsilon(\{z_n\})>0$ be the interpolation constant
appearing in (\ref{compatibility_condition}). We first prove that
there exist constants $M=M(\varepsilon)>0$ and
$\alpha=\alpha(\varepsilon)$ with $0<\alpha<1$ such that for any
positive integer number $n$ and any dyadic arc $I\subset\T$ at
least one of its two halves, say $\tilde{I}$, satisfies
\begin{equation}\label{good_square}
\#\left\{z_k\in Q(\tilde{I}):n-1<\beta(z(I),z_k)\leq n\right\}\leq
M2^{n\alpha}.
\end{equation}
By conformal invariance we may assume that $I=\T$ and $z(I)=0$.
Let $\gamma>0$ be a small number to be fixed later with
$\gamma<\varepsilon$.  Now let $\F_m=\F_m(n)$, $m=1,2$, be the two
collections of points of $\{z_n\}$ given by
\begin{align*}
\F_1&=\left\{z_k\ :\ n-1<\beta(0,z_k)\leq n,\
|\operatorname{arg}z_k|<\frac{\pi}{2}-2^{-n\gamma}\right\},\\
\F_2&=\left\{z_k\ :\ n-1<\beta(0,z_k)\leq n,\
|\operatorname{arg}z_k-\pi|<\frac{\pi}{2}-2^{-n\gamma}\right\}.
\end{align*}

\noindent See Figure \ref{disc}.
\begin{figure}[htbp]

  \psfrag{o}[c][c]{$0$}
  \psfrag{r}[c][c]{$1-2^{-n}$}
  \psfrag{D}[c][c]{$\D$}

  \psfrag{l}[c][c]{$2\cdot 2^{-n\gamma}$}

  \begin{center}

   \includegraphics[width=.45\textwidth]{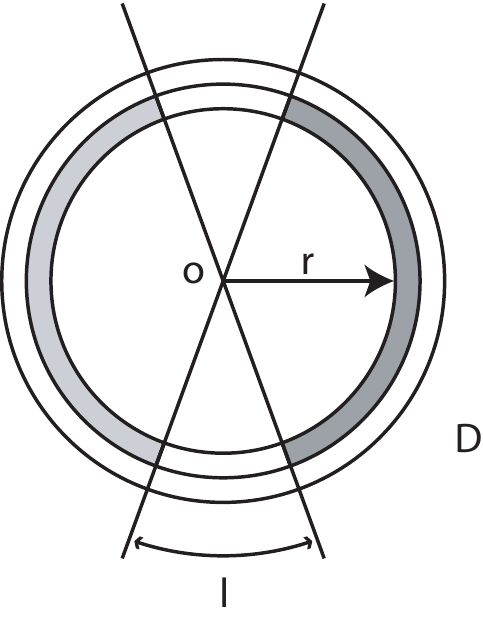}

   \caption{The family $\F_1$ are the points of the sequence in the dark grey zone, while the family $\F_2$ are the ones in the light grey zone.}
   \label{disc}

\end{center}
\end{figure}
Notice that if $z\in\F_1$ and $\tilde{z}\in\F_2$ we have
$\beta(z,\tilde{z})\geq Cn\gamma$, where $C$ is an absolute
constant.  Now consider the values $\{w_k\}$ given by
\begin{align*}
w_k&=\hspace{0.32cm}1-2^{-C\varepsilon\gamma n}\ \ \text{if }z_k\in\F_1,\\
w_k&=-1+2^{-C\varepsilon\gamma n}\ \ \text{if }z_k\in\F_2.
\end{align*}
It is easy to show that the compatibility condition
(\ref{compatibility_condition}) holds, that is,
$\beta(w_j,w_k)\leq\varepsilon\beta(z_j,z_k)$ if
$z_j,z_k\in\F_1\cup\F_2$.  Hence by hypothesis there exists a
function $f\in\B$ with $f(z_k)=w_k$ for any $z_k\in\F_1\cup\F_2$.
Notice that there exists at least one index $m=1,2$ for which
$|\Rea f(z_k)-\Rea f(0)|>1-2^{-C\varepsilon\gamma n}$ for any
$z_k\in\F_m$. So let us assume $m=1$. Write
$w=1-2^{-C\varepsilon\gamma n}$ and let $\tau$ be the automorphism
of the disc which maps $f(0)$ to the origin and which satisfies
$\operatorname{arg}\tau(w)=0$. Observe that $C_1\varepsilon\gamma
n\leq\beta(f(0),w)=\beta(0,\tau(w))$. Now take $h=\tau\circ f$ and
notice that
$$\beta(0,h(z_k))\geq C_1\varepsilon\gamma n,\ \ z_k\in\F_1.$$
Since $\operatorname{arg}h(z_k)=0$, we have
$$|1-h(z_k)|\leq C_2 2^{-C_1\varepsilon\gamma n},\ \ z_k\in\F_1.$$
Now we can apply Lemma \ref{projection} to the function $h$ and
the parameter $\eta=C_2 2^{-C_1\varepsilon\gamma n}$ to deduce
that
$$|\Pi(\F_1)|\leq C_3 2^{-C_1\varepsilon\gamma n}.$$
Notice that the projection $\Pi$ of a single point of $\F_1$ is an
arc of length comparable to $2^{-n}$.  Since the sequence
$\{z_n\}$ is the union of two separated sequences, the
corresponding intervals $\{\Pi(z_n)\}_n$ are quasidisjoint, that
is, the function
$$\sum_{n}\chi_{\Pi(z_n)}(\xi)$$
is bounded on the unit circle. Here $\chi_E$ represents the
characteristic function of a set $E$. So we deduce
$$2^{-n}\#\F_1\leq C_42^{-C_1\varepsilon\gamma n}.$$
Moreover there are at most $C_5 2^{n(1-\gamma)}$ points such that
$n-1<\beta(0,z_k)\leq n$ and
$\pi/2-2^{-n\gamma}\leq|\operatorname{arg}z_k|\leq\pi/2$. So now
the estimate (\ref{good_square}) follows because the term on the
left hand side is bounded by
$C_52^{n(1-\gamma)}+C_42^{n(1-C_1\varepsilon\gamma)}$. We only
need to choose $\gamma<1$ so that $C_1\varepsilon\gamma<1$ and
pick $\alpha=\max\{1-\gamma, 1-C_1\varepsilon\gamma\}$ and
$M=2\max\{C_4,C_5\}$.

\bigskip

\noindent We now prove the necessity of the density condition
$(b)$. So, given a point $z\in\D$ we wish to estimate the number
of points $z_k$ in the sequence $\{z_n\}$ such that
$\beta(z_k,z)\leq n$. Equivalently, according to
(\ref{density_condition}), given a Carleson box $Q$ we need to
estimate $\#\F_n$, where $\F_n=\big\{z_k\in Q :
2^{-n-1}\ell(Q)\leq 1-|z_k|\leq 2^{-n}\ell(Q)\big\}$.  Since any
arc of the circle is contained in the union of at most four dyadic
arcs of comparable total length, we can assume that
$\overline{Q}\cap\T$ is a dyadic arc.  Denoting by $\A_n$ the set
of the dyadic arcs $I$ of length $2^{-n}$ such that
$Q(I)\cap\F_n\neq\emptyset$, we have $\#\F_n\leq C\#\A_n$, where
$C$ is a constant only depending on $\{z_n\}$. An application of
Lemma \ref{sum_good_squares} will provide an estimate of $\#\A_n$.
For this, we only have to observe that estimate
(\ref{good_square}) gives the hypothesis
(\ref{good_square_estimate}) in Lemma \ref{sum_good_squares}. So
estimate (\ref{density_condition}) is proved.

\section{Center of Mass.}\label{sect_center}

\noindent Let $\mu$ be a probability measure on $\R^n$ with
bounded support. We may define the center of mass of $\mu$ as the
unique point in $\R^n$ that minimizes the function
$$H_\mu(x)=\int_{\R^n} \|x-y\|^2d\mu(y).$$
The fact that guarantees the existence and uniqueness of that
minimum is the strong convexity of the euclidean distance (see
\cite[p. 332]{Bur}). This means that the same construction can be
performed in a more general setting, namely, in a metric space
(with certain regularity) such that the distance is strongly
convex. For instance, we can define it in hyperbolic space.

\medskip

\noindent Although the results of this section will be applied
only to the case of hyperbolic plane, we are going to work in the
more general framework of Hadamard spaces. There are at least two
reasons for doing that. The first one is that the restriction to
hyperbolic plane seems not to simplify the arguments. The second
reason is that we are going to follow a small part of the work
done in \cite{LPS}, where the results are placed in this broader
context. Readers not interested in this generality may skip the
definitions given below concerning Hadamard spaces, and just
replace \emph{Hadamard space} by \emph{hyperbolic space} in the
statements.

\medskip

\noindent The aim of this section is to define the center of mass
of a probability measure in a Hadamard space and to prove the
contractive property stated in Proposition \ref{convex_ineq}
below. Let us start with some definitions.

\bigskip

\noindent A \emph{geodesic space} is a metric space in which any
two points can be connected by a minimizing geodesic (see \cite[p.
31]{Heinonen}). Next, we want to give the definition of
nonpositive curvature in the sense of Alexandrov. Let $(X,d)$ be a
geodesic space and $\Gamma$ a triangle in $X$ formed by minimizing
geodesics with vertices $A_1,A_2,A_3$. Consider a euclidean
triangle $\Gamma'$ with vertices $A_1',A_2',A_3'$ such that
$d(A_i',A_{j}')=d(A_{i},A_{j})$ for $i,j=1,2,3$. Let $P$ be a
point in the segment $\overline{A_2A_3}$, and $P'$ the point in
the segment $\overline{A_2'A_3'}$ such that $d(A_2,P)=
d(A_2',P')$. The geodesic space $(X,d)$ is said to have
\emph{nonpositive curvature} if for every $x\in X$ there exists
$r_x>0$ such that every triangle $\Gamma$ contained in $B(x,r_x)$
satisfies, with the above notation, the following inequality
$$d(A_1,P)\leq d(A_1',P').$$

\noindent If we parameterize the segment $\overline{A_2A_3}$ by a
geodesic $\sigma: [0,1]\rightarrow X$ such that $\sigma(0)=A_2$
and $\sigma(1)=A_3$, the above inequality turns out to be
\begin{equation}
\label{conv_dist}
 d(A_1,\sigma(t))^2\leq (1-t)d(A_1,A_2)^2+td(A_1,A_3)^2-t(t-1)d(A_2,A_3)^2,
\end{equation}
for all $t\in [0,1]$, as an easy computation in euclidean geometry
shows. Note that inequality (\ref{conv_dist}) says that the
function $f(x)=d(A,x)^2$, when restricted to a minimizing
geodesic, is strongly convex (see \cite[p. 332]{Bur}).

\noindent A complete simply connected space of nonpositive
curvature is called a \emph{Hadamard space}.

\begin{obs}
Using hyperbolic trigonometry we can verify that hyperbolic space
has indeed nonpositive curvature, and consequently it is a
Hadamard space. More generally, it is a theorem by E.\ Cartan
\cite{Car} that a Riemannian manifold of sectional curvature
bounded above by $0$ is a space of nonpositive curvature, in the
sense of Alexandrov.
\end{obs}

\noindent An important fact about Hadamard spaces is that the
nonpositive curvature condition holds for every triangle. In other
words, for every point $x$ the constant $r_x$ that appears in the
definition of nonpositive curvature can be taken arbitrarily large
(see \cite[p. 329]{Bur}). The following results have been taken
almost verbatim from \cite{LPS}, (Lemmas 2.1, 4.1 and 4.2).
\begin{lma}
Let $\mu$ be a probability measure of bounded support on a
Hadamard space $(X,d)$. Then the function defined by
$$H_\mu(x)=\int_X d(x,y)^2 d\mu(y),$$
has a unique minimum. Moreover, if we denote this point by
$c_\mu$, for all $z \in X$ we have
\begin{equation}
H_\mu(z)\geq H_\mu(c_\mu)+d(c_\mu,z)^2. \label{des_cent}
\end{equation}
\end{lma}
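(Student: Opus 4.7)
The plan is to reduce everything to the fact that $H_\mu$ is \emph{strongly convex along geodesics}, which is the integrated form of the Alexandrov inequality~(\ref{conv_dist}). Concretely, for any two points $x,z\in X$ and any minimizing geodesic $\sigma:[0,1]\to X$ from $x$ to $z$, integrating (\ref{conv_dist}) with $A_2=x$, $A_3=z$, and $A_1=y$ against $d\mu(y)$ yields
\begin{equation*}
H_\mu(\sigma(t))\leq (1-t)H_\mu(x)+tH_\mu(z)-t(1-t)d(x,z)^{2},\qquad t\in[0,1].
\end{equation*}
Once this is available, existence, uniqueness and the improved inequality~(\ref{des_cent}) are essentially formal consequences.

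For existence, first observe that $H_\mu$ is finite and continuous on $X$ (this follows from the triangle inequality together with the fact that $\mu$ has bounded support), and that $H_\mu(x)\to\infty$ as $d(x,x_0)\to\infty$ for any fixed $x_0$; hence $h:=\inf_{x\in X}H_\mu(x)$ is finite. Take a minimizing sequence $\{x_n\}$ with $H_\mu(x_n)\to h$, and for each pair $n,m$ let $m_{n,m}$ be the midpoint of the geodesic joining $x_n$ and $x_m$. Applying the strong convexity inequality with $t=1/2$ gives
\begin{equation*}
h\leq H_\mu(m_{n,m})\leq \tfrac{1}{2}H_\mu(x_n)+\tfrac{1}{2}H_\mu(x_m)-\tfrac{1}{4}d(x_n,x_m)^{2},
\end{equation*}
so $d(x_n,x_m)^{2}\leq 2(H_\mu(x_n)+H_\mu(x_m))-4h\to 0$. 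Thus $\{x_n\}$ is Cauchy, and since the Hadamard space $X$ is complete it converges to a point $c_\mu$, which by continuity satisfies $H_\mu(c_\mu)=h$.

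For uniqueness, suppose $c_\mu$ and $c_\mu'$ both achieve the minimum $h$. The same strong convexity inequality applied at $t=1/2$ to the midpoint of the geodesic joining them gives $h\leq h-\tfrac{1}{4}d(c_\mu,c_\mu')^{2}$, forcing $c_\mu=c_\mu'$.

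Finally, for the sharpened estimate~(\ref{des_cent}), fix $z\in X$ and apply the strong convexity inequality to the geodesic $\sigma$ from $c_\mu$ to $z$: for every $t\in(0,1]$,
\begin{equation*}
H_\mu(c_\mu)\leq H_\mu(\sigma(t))\leq (1-t)H_\mu(c_\mu)+tH_\mu(z)-t(1-t)d(c_\mu,z)^{2}.
\end{equation*}
Subtracting $(1-t)H_\mu(c_\mu)$ from both sides, dividing by $t$, and then letting $t\to 0^{+}$ yields $H_\mu(c_\mu)+d(c_\mu,z)^{2}\leq H_\mu(z)$, which is~(\ref{des_cent}). The only non-routine ingredient here is the passage from the pointwise Alexandrov inequality to its integrated form along geodesics; everything else is a mechanical application of that quadratic refinement of convexity, and the main conceptual obstacle would have been elsewhere if we had tried to construct the center of mass without exploiting this refined inequality.
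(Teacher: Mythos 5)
Your proof is correct and follows essentially the same route as the paper: integrate the Alexandrov inequality to get strong convexity of $H_\mu$ along geodesics, use it with $t=1/2$ to show a minimizing sequence is Cauchy (hence convergent by completeness) and to get uniqueness, and then rearrange the convexity inequality along the geodesic from $c_\mu$ to $z$ and let $t\to 0^+$ to obtain the sharpened estimate. The remark about $H_\mu$ being proper is harmless but unnecessary — finiteness of the infimum already follows from $H_\mu\geq 0$ and $H_\mu$ being finite at one point.
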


\begin{proof}
First of all note that from inequality (\ref{conv_dist}) and the
fact that $\mu(X)=1$, we have
$$H_\mu(\sigma(t))\leq (1-t)H_\mu(z)+tH_\mu(z')-t(1-t)d(z,z')^2,$$
where $z,z'\in X$ and $\sigma:[0,1]\rightarrow X$ is a geodesic
joining them starting at $z$. From the above inequality, the
uniqueness of the minimum of $H_\mu$ follows immediately. For the
existence, let $m$ be the infimum of $H_\mu$ and $\{z_i\}$ a
minimizing sequence. For $i,j$ sufficiently large,
$H_\mu(z_i),H_\mu(z_j)<m+(\epsilon/2)^2$. Now, taking $t=1/2$ in
the above inequality, we get
$$m\leq m+(\epsilon/2)^2 -\frac{1}{4}d(z_i,z_j)^2,$$
which shows that $\{z_i\}$ is a Cauchy sequence. Since $X$ is
complete this sequence converges to a point. This proves that the
minimum of $H_\mu$ is attained at some point which will be denoted
by $c_\mu$. Now, let $z$ be a point in $X$. Taking a geodesic from
$z$ to $c_\mu$, from the above inequality we obtain
$$H_\mu(c_\mu)\leq (1-t)H_\mu(c_\mu)+tH_\mu(z)-t(1-t)d(z,c_\mu)^2,$$
for $t\in[0,1]$. It yields
$$H_\mu(c_\mu)\leq H_\mu(z)-(1-t)d(z,c_\mu)^2,$$
and inequality (\ref{des_cent}) follows. (Note that the same
argument can be applied to an arbitrary strongly convex function
bounded from below, see \cite[p. 333]{Bur}.)
\end{proof}

\smallskip

\noindent The point $c_\mu$ is called the \emph{center of mass} of
$\mu$.

\medskip

\noindent In order to prove the main result of this section, we
need the following instance of Reshetnyak's quadrilateral
inequality, see Lemma 2.1 of \cite{LPS}.
\begin{lemma}
Let $(Y,d)$ be a Hadamard space and $y,y',z,z'\in Y$. Then
$$d(y,z')^2+d(y',z)^2\leq d(y,z)^2 + d(y',z')^2+2d(y,y')d(z,z').$$
\label{Resh_ineq}
\end{lemma}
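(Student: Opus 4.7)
The statement is a form of Reshetnyak's quadrilateral inequality for Hadamard (CAT(0)) spaces. My plan is to deduce it from the CAT(0) inequality (\ref{conv_dist}) by a careful application at the midpoint of one of the two relevant geodesics.

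Write $a = d(y,z)$, $a' = d(y',z')$, $b = d(y,y')$, $c = d(z,z')$, $p = d(y,z')$, $q = d(y',z)$, so the claim reads $p^2 + q^2 \le a^2 + a'^2 + 2bc$. The first step is to let $m$ denote the midpoint of the geodesic segment from $z$ to $z'$ and to apply (\ref{conv_dist}) at $t = 1/2$, first with base point $y$ and then with base point $y'$, yielding
\begin{equation*}
d(y,m)^2 \le \tfrac12(a^2 + p^2) - \tfrac14 c^2, \qquad d(y',m)^2 \le \tfrac12(q^2 + a'^2) - \tfrac14 c^2.
\end{equation*}
The squared triangle inequality $b^2 \le d(y,m)^2 + d(y',m)^2 + 2\, d(y,m)\, d(y',m)$ then relates these bounds to $b$ via a cross term. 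A symmetric computation, this time taking $m'$ to be the midpoint of $y,y'$ and applying (\ref{conv_dist}) with base points $z$ and $z'$, produces a corresponding estimate in which the roles of the pairs $\{y,y'\}$ and $\{z,z'\}$ are interchanged.

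I would then combine these two pairs of estimates, exploiting the strong convexity of the squared distance along geodesics provided by (\ref{conv_dist}), to extract the sharp bound $2bc$ in place of $b^2+c^2$. Equivalently, one may invoke Reshetnyak's majorization theorem (a consequence of CAT(0), obtained by gluing the two Euclidean comparison triangles $(y,z,z')$ and $(y',z,z')$ along the edge $zz'$): this produces a flat comparison quadrilateral in $\R^2$ with the same side lengths $a,q,a',p$ and diagonal $c$ in which, by (\ref{conv_dist}), the remaining diagonal majorizes $b$; the quadrilateral inequality then reduces to the corresponding Euclidean identity, which is nothing but Cauchy--Schwarz applied to the two ``side vectors'' $\vec{yy'}$ and $\vec{zz'}$.

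The principal obstacle is precisely the extraction of the sharp constant $2bc$: a naive AM--GM bound applied to the cross term $2\, d(y,m)\, d(y',m)$ produces only the weaker inequality $p^2+q^2 \le a^2+a'^2+b^2+c^2$. Obtaining $2bc$ requires the finer combination of CAT(0) bounds sketched above, together with the Alexandrov angle estimate $|\cos\theta|\le 1$ between the two geodesic directions. The details are carried out in [LPS, Lemma~2.1], which the paper follows almost verbatim.
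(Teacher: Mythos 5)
The paper does not actually prove this lemma: it states it as ``Lemma A'' and refers to Lemma~2.1 of the reference \cite{LPS} for the proof, noting that Lemmas~2.1, 4.1 and 4.2 of \cite{LPS} are reproduced ``almost verbatim.'' So your task was to supply an argument the paper omits, and there is no in-paper proof against which to compare your strategy.

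That said, the midpoint computation you outline does not lead to the stated inequality, and the gap is not merely one of ``extracting the sharp constant.'' With $m$ the midpoint of $[z,z']$, the CAT(0) condition gives \emph{upper} bounds $d(y,m)^2\le\frac12(a^2+p^2)-\frac14c^2$ and $d(y',m)^2\le\frac12(q^2+a'^2)-\frac14c^2$, and the triangle inequality gives the \emph{lower} bound $b\le d(y,m)+d(y',m)$. Chaining these, the information flows the wrong way: one obtains $b^2\le 2\bigl(d(y,m)^2+d(y',m)^2\bigr)\le a^2+p^2+q^2+a'^2-c^2$, i.e.\ a \emph{lower} bound on $p^2+q^2$. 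Relabeling the four points does convert this into the weak upper bound $p^2+q^2\le a^2+a'^2+b^2+c^2$, as you observe, but no combination of midpoint bounds of this type can overcome the loss incurred by $2\,d(y,m)\,d(y',m)\le d(y,m)^2+d(y',m)^2$, because the CAT(0) hypothesis never supplies a \emph{lower} bound on $d(y,m)\,d(y',m)$. The appeal to ``the Alexandrov angle estimate $|\cos\theta|\le 1$'' is not a step but a restatement of what is to be proved: the inequality is precisely the assertion that the Alexandrov angle between the directions $yy'$ and $zz'$ makes sense, which must be established, not invoked. Your second route --- gluing the two Euclidean comparison triangles for $\triangle(y,z,z')$ and $\triangle(y',z,z')$ along $\bar z\bar z'$ and applying the Euclidean identity $p^2+\bar q^2-a^2-a'^2=2\langle\bar y-\bar y',\,\bar z-\bar z'\rangle\le 2bc$ --- is the standard and correct approach, but it rests on the nontrivial claim that $d(y',z)\le|\bar y'-\bar z|$ in the glued configuration (a consequence of Alexandrov's lemma, with a separate treatment of the non-convex case), or else on the full Reshetnyak majorization theorem, which is far heavier machinery than the quadrilateral inequality itself. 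As written, the key step is asserted but not carried out. One small additional remark: inequality~(\ref{conv_dist}) in the paper carries a sign error --- $-t(t-1)=+t(1-t)$, whereas the CAT(0) comparison requires $-t(1-t)$; your $-\tfrac14c^2$ uses the correct sign, but the discrepancy with the cited display is worth noting.
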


\noindent Let $(X,\mu)$ be a measure space of finite measure, and
$(Y,d)$ be a Hadamard space. Given a bounded map
$f:X\longrightarrow Y$ we denote by $c_f$ the center of mass of
the push-forward measure $f_*\mu$ defined as $(f_*
\mu)(E)=\mu(f^{-1}(E))$, for any measurable set $E\subset Y$.

\begin{prop}\label{convex_ineq}
Let $(X,\mu)$ be a measure space with $\mu(X)=1$ and let $(Y,d)$
be a Hadamard space with distance $d$. If $f,g\in \Li (X,Y)$ are
two essentially bounded maps from $X$ to $Y$, then
$$d(c_f,c_g)\leq \int_X d(f(x),g(x))d\mu(x).$$
\end{prop}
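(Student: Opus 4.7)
The plan is to deduce the $L^1$-contractivity by combining the variational characterization of the center of mass, in the form of inequality (\ref{des_cent}), with Reshetnyak's quadrilateral inequality (Lemma \ref{Resh_ineq}) applied pointwise on $X$. Write $\mu_f=f_*\mu$ and $\mu_g=g_*\mu$, so that $c_f$ and $c_g$ are the minimizers of $H_{\mu_f}$ and $H_{\mu_g}$ respectively; since $f,g\in\Li(X,Y)$, the pushforwards $\mu_f$ and $\mu_g$ are probability measures of bounded support, which is precisely what is needed to invoke Lemma A and hence (\ref{des_cent}).

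The first step is to feed each center to the variational inequality arising from the other measure. Applying (\ref{des_cent}) with $\mu$ replaced by $\mu_f$ and $z=c_g$, and symmetrically with $\mu_g$ and $z=c_f$, and adding the two resulting inequalities yields
\begin{equation*}
2\,d(c_f,c_g)^2 \leq \bigl(H_{\mu_f}(c_g)-H_{\mu_f}(c_f)\bigr) + \bigl(H_{\mu_g}(c_f)-H_{\mu_g}(c_g)\bigr).
\end{equation*}
Rewriting each $H$-value as an integral over $X$ against $\mu$ via the pushforward change-of-variables formula, the right-hand side becomes the integral of
\begin{equation*}
d(f(x),c_g)^2+d(g(x),c_f)^2-d(f(x),c_f)^2-d(g(x),c_g)^2.
\end{equation*}

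The second step is to control this integrand pointwise. For each fixed $x\in X$, Reshetnyak's inequality with $y=f(x)$, $y'=g(x)$, $z=c_f$, $z'=c_g$ bounds the bracketed expression by $2\,d(f(x),g(x))\,d(c_f,c_g)$. Integrating over $X$, using $\mu(X)=1$, and pulling the constant factor $d(c_f,c_g)$ outside the integral gives
\begin{equation*}
2\,d(c_f,c_g)^2 \leq 2\,d(c_f,c_g)\int_X d(f(x),g(x))\,d\mu(x),
\end{equation*}
after which dividing by $2\,d(c_f,c_g)$ (if nonzero; otherwise the conclusion is trivial) produces the desired inequality.

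I do not foresee a substantial obstacle: once (\ref{des_cent}) and Lemma \ref{Resh_ineq} are available, the proof is essentially forced, in that it matches the quadratic in $d(c_f,c_g)$ arising on the left-hand side with the linear bound produced by Reshetnyak on the right. The only care required is to verify that the essential boundedness of $f$ and $g$ suffices to legitimize the pushforward setup and the application of Lemma A; a standard redefinition of $f$ and $g$ on a $\mu$-null set to make them genuinely bounded, which affects none of the integrals or minimization problems involved, handles this cleanly.
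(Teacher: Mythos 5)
Your proof is correct and follows essentially the same route as the paper: both apply Reshetnyak's quadrilateral inequality (Lemma A) pointwise to the quadrilateral $f(x),g(x),c_f,c_g$, integrate, and combine with the variational inequality (\ref{des_cent}) to produce a quadratic-versus-linear comparison in $d(c_f,c_g)$. The only difference is the order in which the two ingredients are invoked, which is immaterial.
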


\begin{proof}
Consider the quadrilateral defined by the points
$f(x),g(x),c_f,c_g$ and apply Lemma A to obtain
\begin{eqnarray*}
d(f(x),c_g)^2+d(g(x),c_f)^2&\leq& d(f(x),c_f)^2+d(g(x),c_g)^2 +\\
&& 2d(f(x),g(x))d(c_f,c_g).
\end{eqnarray*}
Integrating on $X$ and making a change of variables, we get
\begin{eqnarray*}
H_{f_*\mu}(c_g)+H_{g_*\mu}(c_f)&\leq &H_{f_*\mu}(c_f)+H_{g_*\mu}(c_g)+\\
&&2d(c_f,c_g)\int_X d(f(x),g(x))d\mu(x).
\end{eqnarray*}
On the other hand, inequality (\ref{des_cent}) yields
$$(H_{f_*\mu}(c_g)- H_{f_*\mu}(c_f))+(H_{g_*\mu}(c_f) -H_{g_*\mu}(c_g))\geq 2d(c_g,c_f)^2.$$
Combining these two last inequalities, we obtain
$$d(c_g,c_f)\leq \int_X d(f(x),g(x))d\mu(x).$$
\end{proof}

\section{Sufficiency.}\label{sect_sufficiency}
\noindent The proof of the sufficiency is presented in the upper
half plane $\R_+^2$.  We will also denote by $\beta(z,w)$ the
hyperbolic distance between two points $z,w\in\R^2_+$, that is,
$$\beta(z,w)=\frac{1}{2}\log_2\frac{1+\left|\frac{z-w}{z-\overline w}\right|}{1-\left|\frac{z-w}{z-\overline w}\right|}.$$
Recall also that a measure $\mu$ defined in the upper half plane
is a Carleson measure if there exists an absolute constant $C>0$
such that for any Carleson box $Q\subset\R_2^+$ we have that
$$\mu(Q)\leq C\ell(Q).$$
The Carleson norm of the measure $\|\mu\|_C$ is the infimum of the
constants $C>0$ for which this inequality holds.

\bigskip

\noindent Let $\{z_n\}$ be a sequence of points in $\R^2_+$
satisfying conditions $(a)$ and $(b)$ in Theorem
\ref{main_theorem}.  We want to find a constant
$\varepsilon=\varepsilon(\{z_n\})>0$ so that for any sequence of
values $\{w_n\}\subset\D$ for which the compatibility condition
$$\beta(w_n,w_m)\leq\varepsilon\beta(z_n,z_m)$$
holds, there exists a function $f\in\B$ with $f(z_n)=w_n$ for
$n=1,2,\dots$  We first assume that $\{z_n\}$ is a separated
sequence with constant of separation $\delta$. This is not the
general case, but contains the main ideas of the proof. By a
normal families argument we can assume that $\{z_n\}$ has only a
finite number of points. The main part of our argument is to
construct a non-analytic mapping $\varphi:\R^2_+\longrightarrow\D$
that verifies the following three conditions:
\begin{itemize}
\item[$(A)$] For $n=1,2\dots$ we have $\varphi\equiv w_n$ in
$\Delta(z_n,C\delta)$, where $C<1$ is a fixed positive constant
that will be chosen later.\vspace{0.3cm} \item[$(B)$] For any
$z,w\in\R^2_+$ we have $\beta(\varphi(z),\varphi(w))\leq
C\varepsilon\beta(z,w)$.\vspace{0.3cm} \item[$(C)$] The measure
$|\nabla\varphi(z)|dA(z)/(1-|\varphi(z)|^2)$ is a Carleson measure
with Carleson norm smaller than $C\varepsilon$.
\end{itemize}
The next three subsections are devoted to the construction of the
function $\varphi$ when $\{z_n\}$ is a separated sequence, while
in the last two sections $\bar{\partial}$-techniques are applied
to obtain analytic solutions of our interpolation problem.

\subsection{Reduction to well separated sequences}\label{sect_ext_phi}
\noindent The main purpose of this subsection is to reduce the
construction of the smooth interpolating function $\varphi$ to the
case when the sequence $\Lambda=\{z_n\}$ is separated, with large
constant of separation. Moreover we will show that we can assume
that the sequence $\Lambda$ consists of the centers of a certain
collection of dyadic Carleson squares.

\medskip

\noindent First of all we may assume that the whole sequence
$\{z_n\}$ is contained in $Q([0,1))$.  We also can add the point
$z_0=1/2+i3/2$ with value $w_0=0$ in the sequence $\{w_n\}$.

\medskip

\noindent So consider a separated sequence $\{z_n\}$. Given a
positive number $N>0$, we are going to construct a sequence
$\Lambda_0=\{z(n),\ n=0,1,\dots\}$ such that
$$\Lambda\subseteq\bigcup_{z(n)\in\Lambda_0}\overline{\Delta\left(z(n),N\right)}$$
and
$$\inf\big\{\beta(z(n),z(m)):z(n),z(m)\in\Lambda_0,\ n\neq m\big\}\geq N.$$

\smallskip

\noindent The construction of $\Lambda_0$ is as follows.  Assume
that the sequence $\Lambda$ is ordered such that $\Ima z_n\geq\Ima
z_{n+1},\ n=0,1,2\dots$ Take $z(0)=z_0\in\Lambda_0$, and $w(0)=0$.
We consider the family $\F_1$ of dyadic intervals so that $T(I)$
contains a point of $\Lambda\setminus\Delta(z(0),N)$.  Take $I_1$
so that $|I_1|=\max\{|I|:I\in\F_1\}$ and let $z(1)$ be the center
of $T(I_1)$. Take $w(1)=w_{j(1)}$ such that $z_{j(1)}$ is the
closest point of $\{z_n\}$ to $z(1)$.  Now let $\F_2$ be the
family of dyadic intervals $I$ so that $T(I)$ contains a point of
$\Lambda\setminus(\Delta(z(0),N)\cup\Delta(z(1),N))$.  We continue
this construction by induction.  Take
$\Lambda_0=\{z(0),z(1),\dots\}$.  Then $\Lambda_0$ satisfies the
conditions above.  For each $z(n)\in\Lambda_0$ choose
$w(n)=w_{j(n)}$ such that $z_{j(n)}$ is the closest point of
$\Lambda$ to $z(n)$.

\medskip

\noindent Now assume that we have constructed a smooth function
$\varphi_0:\R^2_+\longrightarrow\D$ such that interpolates the
corresponding values $\{w(n)\}$ for points in $\Lambda_0$ and
verifies properties $(A),(B),(C)$ above. Next we will construct a
smooth function which interpolates the prescribed values at the
sequence $\Lambda$ and satisfies analogous estimates.

\medskip

\noindent Since $\Lambda$ is separated, there exists $\delta>0$
such that the hyperbolic discs
$\big\{\Delta(z_n,2\delta):z_n\in\Lambda\big\}$ are pairwise
disjoint.  We define the function $\varphi$ as
$\varphi\equiv\varphi_0$ on $\R_+^2\setminus\cup
\Delta(z_n,2\delta)$, where the union is taken over all points
$z_n\in\Lambda$. For each point $z_n\in\Lambda$ we define
$\varphi\equiv w_n$ on $\Delta(z_n,\delta)$ and $(A)$ follows. So
$\varphi$ is now defined in $\R^2_+\setminus
\cup\{z:\delta\leq\beta(z_n,z)<2\delta\}$. A basic Lemma of
McShanne \cite{McS} and Valentine \cite{Val} (see \cite[p.
43]{Heinonen}) says that if $X$ is a metric space and $E\subset
X$, any Lipschitz function on $E$ can be extended to $X$ with the
same Lipschitz constant. The compatibility condition
(\ref{compatibility_condition}) and property $(B)$ for $\varphi_0$
give that we can extend $\varphi$ to $\Delta(z_n,2\delta)\setminus
\Delta(z_n,\delta)$ so that
\begin{equation*}
\beta(\varphi(z),\varphi(\tilde{z}))\leq
C(\delta)\varepsilon\beta(z,\tilde{z})
\end{equation*}
for any $z,\tilde{z}\in\R^2_+$.  So $(B)$ holds for $\varphi$.
Dividing by $\beta(z,\tilde{z})$ and taking $\tilde{z}\rightarrow
z$ we deduce that
\begin{equation*}
(\Ima z)|\nabla\varphi(z)|\leq C_1(\delta)\varepsilon
(1-|\varphi(z)|^2)
\end{equation*}
for any $z\in\R^2_+$.  So we have that
$$\dfrac{|\nabla\varphi(z)|}{(1-|\varphi(z)|^2)}\leq\dfrac{|\nabla\varphi_0(z)|}{(1-|\varphi_0(z)|^2)}+C_2(\delta)\varepsilon\sum_n\dfrac{\chi_{\Delta(z_n,2\delta)}(z)}{\Ima z}.$$
Since $\varphi_0$ verifies property $(C)$, we only have to show
that the second term gives rise to a Carleson measure.  So let $Q$
be a Carleson box. If $\Delta(z_n,2\delta)\cap Q\neq\emptyset$
notice that $z_n\in C(\delta)Q$. Hence
$$\sum_n\int_Q\frac{\chi_{\Delta(z_n,2\delta)}(z)}{\Ima z}dA(z)\leq C_3(\delta)\sum_{z_n\in C(\delta)Q}\Ima z_n.$$
Since the density condition $(b)$ in Theorem 1 implies Carleson's
condition, then this term is bounded by
$MC_3(\delta)C(\alpha)\ell(Q)$, so $\varphi$ verifies $(C)$.

\noindent Hence, without loss of generality, we may assume that
the sequence $\Lambda=\{z_n\}$ is a well separated sequence formed
by the centers of some dyadic Carleson squares.

\subsection{A non-smooth interpolating function}\label{sect_non-smooth_funct}

As explained before, given a sequence of values satisfying the
compatibility condition (\ref{compatibility_condition}), we wish
to construct a smooth interpolating function $\varphi$ satisfying
conditions $(A)$, $(B)$ and $(C)$. The main purpose of this
section is to construct a piecewise continuous interpolating
function $\varphi$ in the upper half plane which satisfies
condition $(A)$ and, roughly speaking, conditions $(B)$ and $(C)$
if we restrict attention to the vertical direction. These two
conditions will fail in the horizontal direction, but we will
still have certain control which will be used later. This is
better expressed in terms of collections of dyadic squares. For
this purpose, we will use one of the main auxiliary results from
\cite{BN}, a covering Lemma that comes in handy in the present
construction as well. As explained in the previous section, we may
assume that the points $\{z_n\}$ are centers of $T(I)$, for a
certain collection $\A$ of dyadic intervals. The density condition
(\ref{density_condition}) then translates to the following one:
for any dyadic interval $I$ we have that
\begin{equation}\label{density_intervals}
\#\big\{I_k\in\A : I_k\subset I, |I_k|=2^{-n}|I|\big\}\leq
M_12^{\alpha n},\ \ n=0,1,2\dots
\end{equation}
The covering Lemma is the following.

\begin{lemmaB}
Let $\A$ be a collection of dyadic intervals.  The following
conditions are equivalent:
\begin{itemize}
\item[$(a)$] There exist constants $M_1>0$ and $0<\alpha<1$ such
that for any dyadic interval $I$ and any $n=0,1,2,\dots$ we have
\begin{equation*}
\#\big\{I_k\in\A : I_k\subset I, |I_k|=2^{-n}|I|\big\}\leq
M_12^{\alpha n}
\end{equation*}
\item[$(b)$] There exist a family $\G$ of dyadic arcs with
$\A\subset\G$ and a constant $C>0$ such that the following two
conditions hold
\begin{itemize}
\item[$(b1)$]For any dyadic arc $J$ we have
$$\sum_{I\subseteq J, I\in\G}|I|\leq C|J|.$$
\item[$(b2)$]For any pair of intervals $I_0\in\A$ and $I_1\in\G$
with $I_0\subseteq I_1$ we have
$$\#\big\{I\in\G : I_0\subseteq I\subseteq I_1\big\}\geq C^{-1}\log\frac{|I_1|}{|I_0|}.$$
\end{itemize}
\end{itemize}
\end{lemmaB}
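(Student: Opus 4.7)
The plan is to prove the two implications separately, with $(a)\Rightarrow(b)$ carrying the substantive content (and being the direction actually used in the construction of $\varphi_0$).

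For $(a)\Rightarrow(b)$, I would build $\G$ by explicit augmentation of $\A$. Fix an auxiliary parameter $\beta\in(\alpha,1)$ and declare a dyadic interval $I$ to lie in $\G$ if either $I\in\A$ or there exists $k\geq 1$ with
$$\#\{J\in\A:J\subseteq I,\ |J|=2^{-k}|I|\}\geq 2^{\beta k}.$$
To verify $(b1)$, I would split $\sum_{I\subseteq J,\,I\in\G}|I|$ into two pieces: the contribution from $\A$, which is bounded by $\sum_k M 2^{\alpha k}\cdot 2^{-k}|J|\lesssim |J|$ using hypothesis $(a)$ and $\alpha<1$; and the contribution from the ``$\beta$-dense'' intervals, which is controlled by a double-counting trick against their $\A$-descendants, where the sums $\sum_k 2^{-(1-\beta)k}$ converge thanks to $\beta<1$. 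To verify $(b2)$, the idea is that if $I_1\in\G$ and $I_0\in\A$ with $I_0\subseteq I_1$, then on a positive proportion of the dyadic levels between them the intermediate ancestor of $I_0$ still meets the $\beta$-density threshold (since $\beta<1$, the density can only drop by a controlled geometric factor at each step), and these intermediate ancestors therefore belong to $\G$.

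For $(b)\Rightarrow(a)$, I would double-count pairs $(I_0,I)$ with $I_0\in\A$ of size $2^{-n}|I^*|$, $I\in\G$, and $I_0\subseteq I\subseteq I^*$, reducing first to the case $I^*\in\G$ by replacing $I^*$ with its smallest $\G$-ancestor if necessary. Condition $(b2)$ forces the number of such pairs to be at least $c\,n\,\#\A_n(I^*)$, while $(b1)$ combined with the trivial bound ``at most $2^{n-k}$ candidates for $I_0$ inside each $\G$-interval at level $k$'' gives an upper bound of $2^n\sum_k N_k 2^{-k}\leq C\cdot 2^n$, where $N_k=\#(\G\cap \text{level }k)$. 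This yields $\#\A_n(I^*)\lesssim 2^n/n$ as a first step; plugging this back into the inner count $\#\{I_0:I_0\subseteq I\}$ for $I$ at an intermediate level, and iterating the bootstrap finitely many times, upgrades the estimate to the desired $M\,2^{\alpha n}$ for some $\alpha<1$ depending on $C$.

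The principal obstacle is clearly in the direction $(a)\Rightarrow(b)$: $\G$ must simultaneously be \emph{lean} (the Carleson packing $(b1)$) and \emph{rich in chains} $(b2)$, and these two requirements push against each other. The whole argument hinges on the flexibility afforded by the strict inequalities $\alpha<\beta<1$, which ensures that the density threshold used to define $\G$ is low enough to survive a positive fraction of level-drops (giving $(b2)$) yet high enough that geometric series in $2^{-(1-\beta)k}$ converge (giving $(b1)$). Fine-tuning the constants and verifying $(b2)$ on dyadic ancestors is the technical core of the proof.
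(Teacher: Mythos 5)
The paper itself does not prove Lemma~B; it cites~\cite{BN} (B\"oe--Nicolau, ``Interpolation by functions in the Bloch space'') for this covering lemma, so your proposal can only be compared against what the construction must achieve. The direction that matters for the paper is $(a)\Rightarrow(b)$, and that is precisely where your construction has a genuine gap.

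Your definition of $\G$ (adjoin to $\A$ all $I$ that are ``$\beta$-dense'' at some level $k\geq 1$, i.e.\ $\#\{J\in\A:J\subseteq I,\,|J|=2^{-k}|I|\}\geq 2^{\beta k}$) fails condition $(b2)$. Note first that under hypothesis $(a)$ the density criterion can only hold for $k\leq k_0:=\log_2 M_1/(\beta-\alpha)$, so ``$\beta$-dense'' is a condition that sees only a bounded number of generations below $I$. Now consider $\A$ consisting of the root $I^0=[0,1)$, a single deep isolated interval $I_0=[0,2^{-N})$ for a large $N$, and a cluster of $\lceil 2^{\beta k_0}\rceil$ disjoint dyadic intervals of length $2^{-k_0}$ inside $[1/2,1)$. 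This $\A$ satisfies $(a)$. The root $[0,1)$ lies in your $\G$ because of the cluster, and $I_0\in\A\subseteq\G$. But every strict intermediate ancestor $[0,2^{-j})$ of $I_0$, $1\leq j<N$, contains exactly one $\A$-interval (namely $I_0$ itself), so its $\A$-count at any level $k\geq 1$ is at most $1<2^{\beta k}$; hence none of them is in $\G$. Then
$$\#\bigl\{I\in\G : I_0\subseteq I\subseteq [0,1)\bigr\}=2,$$
while $(b2)$ would require this to be at least $C^{-1}N$. Since $N$ is arbitrary, no absolute constant $C$ works. The heuristic you invoke --- that ``the density can only drop by a controlled geometric factor at each step'' --- is precisely what fails: the dyadic child of a $\beta$-dense interval $I_1$ in the direction of $I_0$ may carry \emph{none} of the $\A$-mass witnessing the density of $I_1$, so density is not inherited along the chain toward $I_0$.

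The moral is that $\G$ must explicitly contain a positive proportion of the dyadic ancestors of \emph{every} $\A$-interval, including deep isolated ones; the density-threshold criterion does not accomplish this. The real difficulty in $(a)\Rightarrow(b)$ is then to show that adding these ancestor chains does not destroy the Carleson packing $(b1)$, and here one must exploit, via condition $(a)$, that the ancestor chains of distinct $\A$-intervals overlap heavily as one moves toward the root. Your double-counting bound for $(b1)$ applied to the ``dense'' intervals is fine as far as it goes (the convergence of $\sum_k 2^{(\alpha-\beta)k}$ and $\sum_\ell 2^{(\alpha-1)\ell}$ is correctly identified), but it is controlling a family that is too small to satisfy $(b2)$; once the required ancestor chains are added, a different and more delicate packing argument is needed. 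The $(b)\Rightarrow(a)$ double-counting sketch also appears to stall: after the first pass gives $\A_n\lesssim 2^n/n$, the proposed bootstrap does not obviously gain another factor, because for $k$ close to $n$ the bound $\A_{n-k}(I)\lesssim 2^{n-k}/(n-k)$ is no better than the trivial one and the sum $\sum_k N_k 2^{-k}/(n-k)$ need not be $o(1)$. This direction is not needed for the paper's application, but as written it is not a complete argument either.
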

\noindent We will call $\G$ the \emph{intermediate family}
associated to $\A$. The properties $(b1)$ and $(b2)$ point to two
opposite directions. On the one hand the family $\G$ should be
large to guarantee $(b2)$ but on the other hand, $\G$ should be
small to guarantee $(b1)$. As it turns out, our density condition
$(b)$ in Theorem \ref{main_theorem} is what we need to arrive to a
compromise. The intermediate family will help us to construct the
non-smooth interpolating function, as we may see in the following
lemma.

\begin{lma}\label{nonsmooth_fnct}
Let $\A$ be a family of dyadic intervals satisfying condition
(\ref{density_intervals}), and let $\{z_n\}$ be the sequence of
centers $\{z(I):I\in\A\}$. Let $\G$ be the intermediate family
given by Lemma B. Let $\{w_n\}$ be a sequence of values in the
unit disc satisfying the compatibility condition
$$\beta(w_n,w_m)\leq\varepsilon\beta(z_n,z_m),\ n,m=0,1,2,\dots$$  Then
there exists a piecewise continuous function
$\varphi:\R^2_+\longrightarrow\D$ whose partial derivatives are
complex measures, which satisfies $\varphi\equiv w_n$ on
$\Delta(z_n,1/10)$, for $n=0,1,2,\dots$ and
\begin{itemize}
\item[$a)$] The support of the measure $|\nabla\varphi|$ is
contained in $\cup T(I)\cup\partial_vQ(I)$, where the union is
taken over all intervals $I\in\G$.  Here $\partial_vQ(I)$ means
the vertical part of the boundary of $Q(I)$. \item[$b)$] The
function $\varphi$ has a vertical derivative at any $z\in\R^2_+$
and
$$\dfrac{(\Ima z)|\partial_y\varphi(z)|}{(1-|\varphi(z)|^2)}\leq C\varepsilon.$$
\item[$c)$] The measure
$|\partial_y\varphi(z)|(1-|\varphi(z)|^2)^{-1}dA(z)$ is a Carleson
measure with Carleson norm at most $C\varepsilon$. \item[$d)$]
There exists a universal constant $C_2>0$ such that
$$\dfrac{|\partial_x\varphi(z)|}{1-|\varphi(z)|^2}\leq C_2\varepsilon\sum ds(\partial_vQ(I))(z)$$
as positive measures, where the sum is taken over all $I\in\G$.
Here $ds(\partial_vQ(I))$ means the linear measure in
$\partial_vQ(I)$.
\end{itemize}
\end{lma}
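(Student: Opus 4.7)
The plan is to build $\varphi$ from a hierarchical choice of values attached to the intermediate family $\G \supseteq \A$ furnished by Lemma B: inside each top-box $T(I)$ the function will vary only vertically, and on each shell $Q(I) \setminus T(I) \setminus \bigcup_J Q(J)$ (union over the children $J \in \G$ of $I$) it will be constant. I regard $\G$ as a tree, rooted at $[0,1)$, with parent function $I \mapsto I^*$, where $I^*$ is the smallest element of $\G$ properly containing $I$. To each $I \in \G$ I want to assign a value $\varphi_I \in \D$ such that $\varphi_I = w_n$ whenever $I \in \A$ and $z(I) = z_n$, and $\beta(\varphi_I, \varphi_{I^*}) \leq C\varepsilon$ along every parent-child edge. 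Lemma B $(b2)$ makes this feasible: if $I \in \A$ sits below $I^*$, the chain in $\G$ between them has length at least $C^{-1} \log(|I^*|/|I|)$, which is comparable to $\beta(z(I^*), z(I))$, while the compatibility condition controls $\beta(\varphi_{I^*}, w_n)$ by $\varepsilon$ times the same quantity, so one can move along a hyperbolic geodesic from $\varphi_{I^*}$ towards $w_n$ in unit steps of size $\leq C\varepsilon$. Proceeding top-down, starting from $\varphi_{[0,1)} = w_0 = 0$, produces the required assignment.

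\medskip

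With the $\varphi_I$ in hand I set $\varphi \equiv 0$ outside $\bigcup_{I \in \G} Q(I)$ and, for each $I \in \G$ with parent $I^*$, let $\varphi(x+iy)$ on $T(I)$ depend only on $y$: equal to $\varphi_I$ for $|I|/2 \leq y \leq 3|I|/4$ and interpolating smoothly along the hyperbolic geodesic from $\varphi_I$ to $\varphi_{I^*}$ as $y$ runs from $3|I|/4$ to $|I|$. Below $T(I)$, outside the $Q(J)$ of children $J \in \G$ of $I$, let $\varphi \equiv \varphi_I$. The flat strip $|I|/2 \leq y \leq 3|I|/4$ forces $\varphi \equiv w_n$ on $\Delta(z(I), 1/10)$ whenever $I \in \A$. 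Then $(a)$ is immediate, since $\varphi$ is locally constant off $\bigcup_{I \in \G} T(I)$ and has only a distributional jump along each $\partial_v Q(I)$. Property $(b)$ follows because inside $T(I)$ the function crosses a hyperbolic distance $\leq C\varepsilon$ over a vertical hyperbolic span of order $1$, yielding $(\Ima z)|\partial_y \varphi(z)| \leq C\varepsilon(1-|\varphi(z)|^2)$ pointwise. For $(c)$, any Carleson box $Q(J)$ satisfies
\begin{equation*}
\int_{Q(J)} \frac{|\partial_y \varphi(z)|}{1 - |\varphi(z)|^2}\, dA(z) \;=\; \sum_{I \in \G,\; I \subseteq J} \int_{T(I)} \frac{|\partial_y \varphi|}{1-|\varphi|^2}\, dA \;\leq\; C\varepsilon \sum_{I \in \G,\; I \subseteq J} |I| \;\leq\; C' \varepsilon\, |J|,
\end{equation*}
the last inequality being Lemma B $(b1)$. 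Finally $(d)$ holds because each jump of $\varphi$ across $\partial_v Q(I)$ is between two values at hyperbolic distance $\leq C\varepsilon$, so $|\partial_x \varphi|/(1-|\varphi|^2)$ as a positive measure is bounded by $C_2 \varepsilon\, ds(\partial_v Q(I))$.

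\medskip

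The main obstacle is the assignment of the $\varphi_I$. A single descending chain in $\G$ can be interpolated without incident thanks to Lemma B $(b2)$, but at a branching node a single value $\varphi_I$ has to serve as the $C\varepsilon$-successor of its parent and simultaneously as a valid starting point for independent chains reaching every target $w_n$ lying beneath $I$. Pairwise compatibility of $\{w_n\}$ makes this combinatorially possible, but carrying it out rigorously is the technical heart of the lemma and closely follows the scheme of \cite{BN}.
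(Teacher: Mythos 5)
Your plan for building $\varphi$ from an assignment $I\mapsto\varphi_I$ on the tree $\G$ (vertical interpolation inside each $T(I)$, locally constant elsewhere) matches the paper's construction, and your verifications of $(a)$--$(d)$ from the properties of $\G$ are correct in outline. But the heart of the lemma is precisely the step you flag at the end and then leave open: producing the assignment $\varphi_I$ with $\beta(\varphi_I,\varphi_{I^*})\leq C\varepsilon$ along every edge and the right values on $\A$. Your top-down greedy scheme --- marching from $\varphi_{I^*}$ toward a single target $w_n$ along a geodesic in steps $\leq C\varepsilon$ --- is well-defined along one descending chain, but at a branching node $I$ the chosen $\varphi_I$ must be a feasible starting point for \emph{all} sub-trees below $I$ simultaneously, and nothing in the greedy scheme guarantees this. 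You note this yourself but do not resolve it, so as written the proof has a genuine gap.

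The paper closes exactly this gap by a different mechanism. Instead of constructing the assignment incrementally, observe that the \emph{desired} values are already given on the subset $\V\subset\G$ of nodes corresponding to $\A$, namely $\varphi_0(a(I))=w_n$ when $z_n=z(I)$. The pairwise compatibility condition $\beta(w_n,w_m)\leq\varepsilon\beta(z_n,z_m)$ together with Lemma B $(b2)$ shows that $\varphi_0:\V\to(\D,\beta)$ is Lipschitz with constant $C_1\varepsilon$ in the tree metric: the tree distance between $a(I),a(J)\in\V$ dominates $C^{-1}\beta(z(I),z(J))$ because $(b2)$ applies to each branch from $I$ and from $J$ up to their least common ancestor in $\G$. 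One then invokes the McShane--Valentine Lipschitz extension lemma (a Lipschitz map from a subset of a metric space into a complete metric space such as $(\D,\beta)$ extends to the whole space with the same Lipschitz constant, since $(\D,\beta)$ is a Hadamard space / injective enough for the McShane construction component-wise, or directly by the version the paper cites) to extend $\varphi_0$ to all of the tree $\G$, automatically giving $\beta(\varphi_I,\varphi_{I^*})\leq C_1\varepsilon$ for every edge. This sidesteps the branching problem entirely: feasibility is never checked locally node by node, it is guaranteed globally by the extension theorem. That is the idea your proof is missing, and without it the argument does not go through.
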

\begin{proof}
Given the interpolation problem for the sequence $\{z_n\}$ we
first extend it to a suitable interpolation problem on a bigger
sequence defined in terms of the intermediate family. We proceed
as in \cite{BN}. To this end, observe that the intermediate family
$\G$ can be viewed as a tree. The root node corresponds to the
unit interval (this is the reason why the point $z_0=1/2+i3/2$ was
added to the sequence). Every interval $I\in\G$ corresponds to a
node $a(I)$ in the tree. Two nodes $a(I)$ and $a(\tilde{I})$ with
$I,\tilde{I}\in\G$ and $\tilde{I}\subset I$ are joined by an edge
in the tree if $\tilde{I}$ is maximal among all $J\in\G$ with
$J\subset I$. As usual, the distance between two nodes in the tree
is defined to be the minimal number of edges joining them. Observe
that the condition $(b2)$ in Lemma B tells that the distance
between two edges $a(I), a(\tilde{I})$ in the tree is bigger than
a fixed multiple of the hyperbolic distance between the associated
points $z(I), z(\tilde{I})$.  We consider the set $\V$ of vertices
of the tree corresponding to points of the original sequence
$\{z_n\}$ and we define a function $\varphi_0$ on $\V$ by
$\varphi_0(a(I))=w_n$ if $z_n$ is the center of $T(I)$. Then the
compatibility condition
$\beta(w_n,w_m)\leq\varepsilon\beta(z_n,z_m)$, and $(b2)$ in Lemma
B gives that the function $\varphi_0:\V\longrightarrow\D$ is
Lipschitz with constant $C_1\varepsilon$ when considering the
metric of the tree in $\V$ and the hyperbolic metric in the image
domain $\D$. We can apply here again the Lemma by McShanne and
Valentine enounced in the previous section to extend the function
$\varphi_0$ to the whole tree $\{a(I):I\in\G\}$.  The extended
function will be also denoted by $\varphi_0$.

\medskip

\noindent Let us now return to the upper half plane.  Let
$\Lambda=\{z_n\}$ be the original sequence and let
$\Lambda^+=\{z(I):I\in\G\}$ be the sequence formed by the center
of the Carleson squares corresponding to intervals in the
intermediate family $\G$, and rename $\Lambda^+=\{z_n^+\}$.  If
$z_n^+=z(I),\ I\in\G$, we denote by
$w_n^+=\varphi_0(z_n^+)=\varphi_0(a(I))$. Notice that if
$z_n^+,z_m^+\in\Lambda^+$ correspond to two consecutive nodes in
the tree, then $\beta(w_n^+,w_m^+)\leq C_2\varepsilon$.  We define
a function $\varphi_1:\R_+^2\longrightarrow\D$ as
$\varphi_1(z_0)=0$ and $\varphi_1(z)=w_n^+$ if $z\in
Q(z_n^+)\backslash\cup Q(z_m^+)$, where the union is taken over
all $Q(z_m^+),\ z_m^+\in\Lambda^+$, contained in $Q(z_n^+)$.  We
next proceed as in \cite[p. 75]{Seip} to smooth the function
$\varphi_1$ in the vertical direction.

\medskip

\noindent Let $Q_0$ be the unit square and set
$$\psi_0(x+iy)=\chi_{Q_0}(x+iy)\min\Big\{1,6(1-y)\Big\}.$$
For each point $z_n^+\in\Lambda^+$ we denote by $Q(n)$ the dyadic
square such that $z_n^+\in T(Q(n))$.  If $Q(n)=Q([a_n,b_n))$, we
define $\tau_n(z)=(z-a_n)/(b_n-a_n)$ and
$\psi_n(z)=\psi_0(\tau_n(z))$. So $\psi_n$ vanishes outside the
square $Q(n)$, has constant value 1 in $Q(n)\cap\{z:\Ima
z<5(b_n-a_n)/6\}$ and it is linear in the vertical direction in
$Q(n)\cap\{z:\Ima z\geq5(b_n-a_n)/6\}$.

\medskip

\noindent We set $b_0^+=w_0=0$ and
$b_n^+=w_n^+-w_m^+=\varphi_1(z_n^+)-\varphi_1(z_m^+),\ n\geq1$,
where $I(z_m^+)$ is the smallest dyadic interval in the family
$\G$ which contains the interval $I(z_n^+)$.  In other words,
$z_m^+$ corresponds to the vertex in the tree above $z_n^+$.  Let
us consider
$$\varphi(z)=\sum_n b_n^+\psi_n(z),\ \ z\in\R^2_+.$$
It is clear that $\partial_x\varphi$ and $\partial_y\varphi$, both
as distributions, are complex measures.  Observe that $\varphi$
has a pointwise vertical partial derivative at any point of the
upper half plane and
$$\partial_y\varphi(z)=0,\ \text{for
}z\notin\bigcup Q(I)\cap\Big\{z:\frac{5}{6}\ell(I)<\Ima z\Big\},$$
while
$$\partial_x\varphi(z)=0,\ \text{for
}z\notin\bigcup
\partial_v Q(I).$$
Here both unions are taken over all intervals $I\in\G$.  So $a)$
holds. Let us now check $b)$.  Observe that if $I(m)\in\G$ and
$I(n)\subseteq I(m)$ is a maximal interval among the ones in $\G$
contained in $I(m)$, then by construction we have
$\beta(\varphi(z_n^+),\varphi(z_m^+))\leq C_2\varepsilon$.  Since
$\varphi$ is linear in the vertical direction, we deduce that
$$\beta(\varphi(z),\varphi(\tilde{z}))\leq C_3\varepsilon\beta(z,\tilde{z}),$$
whenever $z,\tilde{z}\in T(I(n))$ for some $I(n)\in\G$.  Dividing
the equality by $\beta(z,\tilde{z})$ and taking the limit when
$\tilde{z}$ tends to $z$ along the vertical direction we have that
$$\frac{(\Ima z)|\partial_y\varphi(z)|}{(1-|\varphi(z)|^2)}\leq C_4\varepsilon.$$

\noindent The property $c)$ is a direct consequence of $b)$,
property $(b1)$ in Lemma B and the fact that the support of the
measure $\partial_y\varphi$ is contained in $\cup T(I)$, where the
union is taken over all intervals $I\in\G$. Notice that the
behavior of the measure $\partial_x\varphi$ is worse because
$\varphi$ may have jumps across the vertical sides of $Q(I)$,
$I\in\G$. However, since at each step the jump is of a fixed
hyperbolic length, if we consider $z$ and $\tilde{z}$ close
enough, then $\beta(\varphi(z),\varphi(\tilde{z}))$ is at most
$C\varepsilon$ times the number of dyadic intervals $I$ in the
intermediate family $\G$ such that $\partial_v Q(I)$ separates $z$
and $\tilde{z}$. Hence
$$\dfrac{|\partial_x\varphi(z)|}{(1-|\varphi(z)|^2)}\leq C_5\varepsilon\sum ds(\partial_vQ(I))(z)$$
as positive measures, where the sum is taken over all $I\in\G$.
\end{proof}

\subsection{Averaging}\label{sect_average}
The next step of the proof will be smoothing the function
$\varphi$ defined in the above paragraph, so that
$|\partial_x\varphi(z)|/(1-|\varphi(z)|^2)$ verifies also
properties $b)$ and $c)$ in Lemma \ref{nonsmooth_fnct}. As proved
in Section \ref{sect_ext_phi} we may assume that $\Lambda=\{z_n\}$
is a well separated sequence consisting of centers of a certain
collection of dyadic squares. So assume that $\beta(z_n,z_m)>5$,
thereby for each $n=1,2,\dots$ we can add to the sequence
$\Lambda$ the points $z_n^-:=z_n-4\Ima z_n/3$ and
$z_n^+:=z_n+4\Ima z_n/3$. These points are respectively the
centers of the two dyadic squares of the same generation adjacent
to $Q(n)$, denoted by $Q(n)^-$ and $Q(n)^+$. The extended sequence
$\Lambda\cup\{z_n^-\}\cup\{z_n^+\}$, which will be also denoted by
$\Lambda$, will still be a separated sequence. For each
$n=1,2\dots$, attached to the extra points $z_n^-$ and $z_n^+$
consider the corresponding value $w_n$ in the sequence $\{w_n\}$.
Hence the function $\varphi$ constructed in the previous section
verifies that $\varphi(z_n)=\varphi(z_n^-)=\varphi(z_n^+)=w_n$,
for n=1,2\dots  This is a trick used by C.\ Sundberg in
\cite{Sun}.

\medskip

\noindent For $0\leq t<1$ let now $\Lambda_t$ be the sequence
obtained by translating the sequence $\Lambda$ by $t$ euclidean
units, that is, $\Lambda_t=\big\{z_n+t : n=0,1,2,\ldots\big\}$.
Let $\G_t$ be the intermediate family of dyadic intervals given in
Lemma B associated to the sequence $\Lambda_t$. Finally let
$\varphi_t$ be the function given by Lemma \ref{nonsmooth_fnct}.
Notice that we could also have looked at it from an equivalent
point of view: fix the sequence $\Lambda$ and let $\Di_t=\big\{I_t
: I_t-t\in\Di\big\}$ be the translation of the standard dyadic
intervals by $t$ euclidean units. We could have considered the
analogues of Lemma \ref{nonsmooth_fnct} for the sequence $\Lambda$
and the translated dyadic family $\Di_t$.

\medskip

\noindent Observe that for any $z\in\R^2_+$ the set of possible
values $\big\{\varphi_t(z+t) : 0\leq t<1\big\}$ is bounded.  Then
we may apply results in Section \ref{sect_center} to define
$\varphi(z)$. Take as measure space the unit interval $[0,1)$ with
the Lebesgue measure, and as Hadamard space the hyperbolic disc
$\D$. For each $z\in\R^2_+$ consider the mapping
$\varphi(z,\cdot):[0,1)\longrightarrow\D$ defined as
$\varphi(z,t)=\varphi_t(z+t)$, $t\in[0,1)$. Then we define
$\varphi(z)$ as the center of mass of the pushforward measure
$\mu$ on $\D$ defined as $\mu(E)=|\{t\in[0,1):\varphi_t(z+t)\in
E\}|$, $E\in\R^2_+$. Now we will prove that the function $\varphi$
verifies properties $(A)$, $(B)$ and $(C)$.

\bigskip

\noindent In Figure \ref{figure2}, the left hand side represents a
point $z_n$ of the original sequence and its two corresponding
extra points $z_n^-$ and $z_n^+$. The right hand side corresponds
to translate the sequence by $t$ euclidean units. Since we have
added the extra points to the original sequence, for every
$t\in[0,1)$ the function $\varphi_t$ verifies that
$\varphi_t(z+t)=w_n$ for every $z\in Q(n)\cup Q(n)^-\cup Q(n)^+$
with $\ell(Q)/2<\Ima z<5\ell(Q)/6$. In other words, for every
$t\in[0,1)$ the function $\varphi_t(z+t)$ has constant value $w_n$
in the grey strip of Figure \ref{figure2}. In particular,
$\varphi_t(z+t)=w_n$ for every $z\in\Delta(z_n,1/10)$ and every
$t\in[0,1)$. This proves $(A)$.
\begin{figure}[htbp]

  \psfrag{Q1}[c][c]{$Q(n)^-$}
  \psfrag{Q2}[c][c]{$Q(n)$}
  \psfrag{Q3}[c][c]{$Q(n)^+$}
  \psfrag{Q4}[c][c]{$Q_t(n)^-$}
  \psfrag{Q5}[c][c]{$Q_t(n)$}
  \psfrag{Q6}[c][c]{$Q_t(n)^+$}
  \psfrag{Z1}[c][c]{$z_n^-$}
  \psfrag{Z2}[c][c]{$z_n$}
  \psfrag{Z3}[c][c]{$z_n^+$}
  \psfrag{Z4}[c][c]{$z_n^-+t$}
  \psfrag{Z5}[c][c]{$z_n+t$}
  \psfrag{Z6}[c][c]{$z_n^++t$}

  \begin{center}

  \includegraphics[width=1\textwidth]{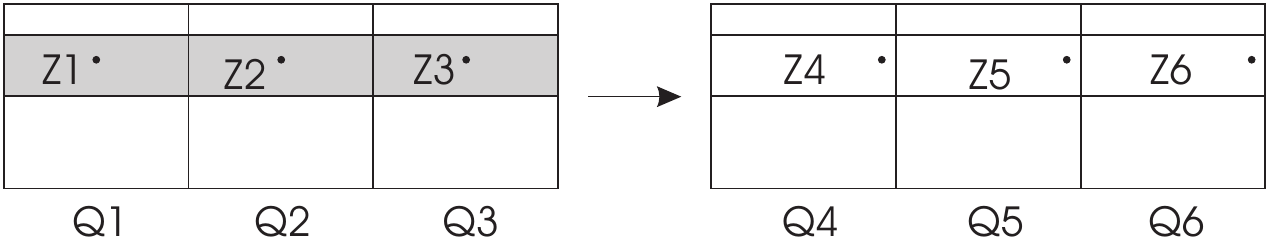}

\end{center}
\caption{The function $\varphi_t(z+t)$ has constant value $w_n$ in
the grey strip for every $t\in[0,1)$.}
   \label{figure2}
\end{figure}

\noindent Now we can check properties $(B)$ and $(C)$. Applying
Proposition \ref{convex_ineq} to the mappings $\varphi(z,\cdot)$
and $\varphi(w,\cdot)$ we obtain that
\begin{equation}\label{integral_ineq}
\beta(\varphi(z),\varphi(w))\leq\int_0^1\beta(\varphi_t(z+t),\varphi_t(w+t))\
dt.
\end{equation}
If $|\nabla\varphi(z)|$ was replaced by $|\partial_y\varphi(z)|$,
then properties $(B)$ and $(C)$ would follow from estimate
(\ref{integral_ineq}) and the corresponding properties in Lemma
\ref{nonsmooth_fnct}. Property $d)$ in Lemma \ref{nonsmooth_fnct}
and the inequality (\ref{integral_ineq}) will provide a similar
estimate for $|\partial_x\varphi(z)|$.

\medskip

\noindent First let $z,w\in\R_+^2$ with $\Ima z=\Ima w$, and take
$k\in\Z$ so that $2^{-k-1}< \Ima z\leq 2^{-k}$.  It is enough to
prove $(B)$ for $z,w\in\R_+^2$ such that $|z-w|\leq \Ima z$.  For
$j=1,\dots,k$ let $A_j$ be the set of $t\in[0,1)$ such that there
exists $I\in\G_t$, $|I|=2^{-j}$ such that at least one of the two
vertical sides of $Q(I)$ is between $z+t$ and $w+t$. Since
$\varphi_t$ can jump at most $C\varepsilon$ hyperbolic units in
$T(I)$ with $I\in\G_t$, then if $t\in A_j$ we have that
$$\beta(\varphi_t(z+t),\varphi_t(w+t))\leq C_2\varepsilon(k-j).$$
Since $|A_j|\leq 2^{j}|z-w|\leq C 2^{j-k}\beta(z,w)$, we deduce
that
$$\beta(\varphi(z),\varphi(w))\leq C_3\varepsilon\sum_{j=1}^k(k-j)2^{j-k}\beta(z,w)\leq C_4\varepsilon\beta(z,w).$$
So $(B)$ holds, and hence $\varphi$ is differentiable almost
everywhere in $\D$. To check now property $(C)$ for
$|\partial_x\varphi(z)|$, let $Q$ be a Carleson square. Divide by
$\beta(z,w)$ in both sides of estimate (\ref{integral_ineq}).
Making $w$ tend to $z$ and then using property $d)$ in Lemma
\ref{nonsmooth_fnct} we obtain that
\begin{align*}
\int_Q\frac{|\partial_x\varphi(z)|}{1-|\varphi(z)|^2}&\leq\int_0^1\left(\int_Q\frac{|\partial_x\varphi_t(z+t)|}{1-|\varphi_t(z+t)|^2}\right)dt\\
&\leq C_2\varepsilon\int_0^1\sum \ell(\partial_vQ(I)\cap(Q-t))dt,
\end{align*}
where the sum is taken all over the dyadic interval $I\in\G_t$.
Split the above sum into two terms, the last integral can be
written as $(P_1)+(P_2)$, where $(P_1)$ corresponds to those
intervals $I\in\G_t$ such that $|I|<\ell(Q)$ and $(P_2)$ to those
such that $|I|\geq\ell(Q)$.  Applying property $(b1)$ in Lemma B
to $(P_1)$, for any $t\in[0,1)$ we have
$$\sum\ell(\partial_vQ(I)\cap(Q-t))\leq C\ell(Q),$$
where the sum is taken over all $I\in\G_t$ with $|I|<\ell(Q)$.
Hence $(P_1)\leq C\ell(Q)$.  To estimate $(P_2)$ we proceed as in
property $(B)$. Let $k\in\Z$ so that $2^{-k-1}<\ell(Q)\leq2^{-k}$.
For each dyadic interval $I$ of generation $j=1,\dots,k$ let
$A(I)$ be the set of $t\in[0,1)$ such that $I\in\G_t$ and
$\partial_v Q(I)\cap(Q-t)\neq\emptyset$. Take $A_j=\cup A(I)$,
where the union is taken over all dyadic intervals of generation
$j$. Notice that $|A_j|\leq2^{j-k}$. Hence
$$(P_2)\leq\sum\ell(Q)|A(I)|,$$
where the sum is taken over the dyadic intervals $I$ of generation
smaller than $k$.  Hence
$$(P_2)\leq\ell(Q)\sum_{j=1}^k|A_j|\leq2\ell(Q).$$
So property $(C)$ holds.

\subsection{A suitable $\bar\partial$ problem}\label{sect_deltabarr}
The main purpose of this section is to construct an analytic
interpolating function.  The main tools will be a certain
$\bar\partial$-equation and $BMO$ techniques.  We start with the
following well known result (see \cite{Var}) whose proof is
presented for the sake of completeness.  Recall that $\Bm(\R)$ is
defined as the set of functions $f\in \Lu(\R)$ such that
$$\|f\|_{\Bm}=\sup\frac{1}{|I|}\int_I|f(x)-f_I|dx,$$
where $f_I=\dfrac{1}{|I|}\displaystyle\int_I f$ and the supremum
is taken over all intervals $I\subset\R$. Let us also denote by
$P_z(f)$ the Poisson integral of $f$ at the point $z$.

\begin{lma}\label{BMO_result}
Let $F$ be a smooth function in $\R^2_+$ such that
$$f(x)=\lim_{y\rightarrow0}F(x+iy)$$
exists almost every $x\in\R$.  Assume $f\in \Lu(\R)$ and $|\nabla
F(z)|dA(z)$ is a Carleson measure.  Then $f\in \Bm(\R)$ and
$$\|f\|_{\Bm}\leq C \| |\nabla F(z)|dA(z) \|_C.$$
Moreover, for any $z\in\R^2_+$ there exists a point $\tilde{z}\in
T(I(z))$ such that $$|P_z(f)-F(\tilde{z})|\leq C \||\nabla
F(z)|dA(z)\|_{C}.$$
\end{lma}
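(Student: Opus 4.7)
My plan is to treat the two assertions separately, deducing the $\Bm$ bound first and then handling the pointwise Poisson comparison as a consequence combined with standard estimates.

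Write $M=\||\nabla F(z)|dA(z)\|_C$ and fix an interval $I\subset\R$ with $y_0=|I|$. The first observation is the vertical decomposition
$$f(x)-F(x+iy_0)=-\int_0^{y_0}\partial_y F(x+iy)\,dy,$$
which after integration in $x\in I$ and an application of the Carleson condition on $Q(I)$ gives
$$\frac{1}{|I|}\int_I |f(x)-F(x+iy_0)|\,dx\leq \frac{1}{|I|}\int_{Q(I)}|\nabla F|\,dA\leq M.$$
So it remains to show that $F(\cdot+iy_0)$ has controlled oscillation on $I$. Taking $c_I=\frac{1}{|I|}\int_I F(x+iy_0)\,dx$, this is equivalent to bounding $\frac{1}{|I|^2}\int_I\int_I |F(x+iy_0)-F(x'+iy_0)|\,dx\,dx'$ by $CM$, which by triangle inequality is controlled by $\frac{2}{|I|}\int_I |F(x+iy_0)-F(x_I+iy_0)|\,dx$, where $x_I$ is the midpoint of $I$.

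The main step is to bound this last quantity. One cannot use the line integral of $\partial_x F$ at height $y_0$ directly, since only a Carleson condition on $|\nabla F|\,dA$ is available. The trick is to average over an extra parameter $y_1\in[y_0,2y_0]$: for each $y_1$ connect $x+iy_0$ to $x_I+iy_0$ by the broken path that goes vertically to $x+iy_1$, horizontally to $x_I+iy_1$, then vertically back down to $x_I+iy_0$. Averaging in $y_1$, dividing by $y_0$, and applying Fubini turns each of the three line integrals into an integral of $|\nabla F|$ over a region comparable to $Q(2I)$ with an extra factor $y_0^{-1}$, which by the Carleson hypothesis is $\leq CM$. This yields
$$\frac{1}{|I|}\int_I |F(x+iy_0)-c_I|\,dx\leq CM,$$
and combined with the vertical bound gives $\frac{1}{|I|}\int_I|f-c_I|\,dx\leq CM$, proving $\|f\|_{\Bm}\leq CM$.

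For the moreover part, fix $z\in\R_+^2$ and let $I=I(z)$ be the dyadic interval with $z\in T(I)$. By continuity of $F$ along the top edge of $Q(I)$ and the intermediate value theorem applied to $\Rea F$ and $\Ima F$, there exists $\tilde z$ on the segment $\{x+iy_0:x\in I\}\subset\overline{T(I(z))}$ such that (a component-wise adjustment of) $F(\tilde z)$ equals $c_I$; a slight perturbation of this argument, or picking $\tilde z$ minimizing $|F(\tilde z)-c_I|$ on this segment, gives $|F(\tilde z)-c_I|\leq CM$. Then write $|P_z(f)-F(\tilde z)|\leq |P_z(f)-f_I|+|f_I-c_I|+|c_I-F(\tilde z)|$. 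The first term is the classical estimate for Poisson integrals of $\Bm$ functions at points $z\in T(I)$: decomposing the integral over dyadic annuli $2^kI\setminus 2^{k-1}I$ and using $|f_{2^kI}-f_I|\leq Ck\|f\|_{\Bm}$ yields $|P_z(f)-f_I|\leq C\|f\|_{\Bm}\leq CM$. The second term is bounded by the first vertical estimate, $|f_I-c_I|\leq M$. This completes the proof.

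The main obstacle is precisely the horizontal-oscillation bound for $F(x+iy_0)$ on $I$: the naive pointwise line integral of $\partial_x F$ is not controlled by the Carleson norm, and one must route the paths through the interior of $Q(2I)$ and average in an auxiliary height parameter so that Fubini converts line integrals into area integrals to which the Carleson hypothesis applies.
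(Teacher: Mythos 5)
Your argument is correct and runs along essentially the same lines as the paper's proof: both split into a vertical-integral estimate over $Q(I)$ and a horizontal-oscillation estimate controlled via Fubini, then finish the Poisson comparison with the standard $\Bm$ inequality $|f_I-P_z(f)|\leq C\|f\|_{\Bm}$ for $z\in T(I)$. The one small divergence is in how the horizontal oscillation is handled: the paper applies Fubini to $\int_{T(I)}|\nabla F|\,dA\leq M|I|$ to extract a single good height $t_0\in(|I|/2,|I|)$ at which $\int_I|\nabla F(x+it_0)|\,dx\leq 2M$, and then simply takes $\tilde z=x_I+it_0$; you instead fix the height at $y_0=|I|$ and recover the same control by averaging broken paths over an auxiliary height parameter $y_1\in[y_0,2y_0]$, at the cost of invoking the Carleson bound on the slightly larger box $Q(2I)$. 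Both are the same Fubini idea in different packaging, and the paper's version is marginally more economical since it yields the good line and the point $\tilde z$ in one stroke rather than via a Chebyshev selection on the top edge afterwards.
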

\begin{proof}
Let $I\subset\R$ be an interval and let $z_I$ be the center of the
square $T(I)$, and $x_I$ the center of the interval $I$. Since
$|\nabla F(z)|dA(z)$ is a Carleson measure we have that
$$\int_{T(I)}|\nabla F(z)|dA(z)=\int_{|I|/2}^{|I|}\int_I |\nabla F(x+it)|\,dx\,dt\leq \|\nabla F\|_{C}|I|.$$
Hence there exists $t_0\in\left(|I|/2,|I|\right)$ such that
$\displaystyle\int_I|\nabla F(x+it_0)|\,dx\leq 2\|\nabla F\|_{C}$,
so for any $x\in I$ we have that
$$|F(x+it_0)-F(x_I+it_0)|\leq\int_I |\nabla F(s+it_0)|\,ds\leq 2\|\nabla F\|_{C}.$$
Moreover for any $x\in I$ we have that
$$|f(x)-F(x+it_0)|\leq\int_{0}^{t_0} |\nabla F(x+is)|\,ds.$$
The last two inequalities imply that
\begin{multline*}
\frac{1}{|I|}\int_I|f(x)-F(x_I+it_0)|\,dx\leq\\\leq\frac{1}{|I|}\int_I|f(x)-F(x+it_0)|\,dx+\frac{1}{|I|}\int_I|F(x+it_0)-F(x_I+it_0)|\,dx\leq\\
\leq\frac{1}{|I|}\int_{Q(I)}|\nabla F(z)|\,dA(z)\,+\,2\|\nabla
F\|_{C}\leq 3\|\nabla F\|_{C},
\end{multline*}
which shows that $f\in \Bm$ and that $\|f\|_{\Bm}\leq C \|\nabla
F\|_C$. The last estimate in the theorem follows from the well
known fact that
$$\left|\frac{1}{|I|}\int_I f - P_{z_I}(f)\right|\leq C\|f\|_{\Bm}.$$
\end{proof}

\medskip

\noindent Let $\{z_n\}$ be a separated sequence satisfying the
density condition $(b)$ in Theorem \ref{main_theorem}. Given a
sequence of values $\{w_n\}$ satisfying the compatibility
condition (\ref{compatibility_condition}), we next construct a
function $f\in\Hi$ with $\|f\|_\infty\leq 1$ and $f(z_n)=w_n$ for
$n=1,2,\dots$ Let $\varphi$ be the smooth interpolating function
constructed in subsections \ref{sect_non-smooth_funct} and
\ref{sect_average}. We will apply Lemma \ref{BMO_result} to the
function $F(z)=\log(1-|\varphi(z)|^2)$, which by $(C)$ satisfies
$$\| |\nabla F(z)|dA(z) \|_C\leq C\varepsilon.$$
Actually, since the function $F$ verifies that
$|F(z)-F(\tilde{z})|\leq C$ if $\beta(z,\tilde{z})\leq 1$, the
last estimate in Lemma \ref{BMO_result} gives that there exists a
universal constant $C_1>0$ such that
\begin{equation}\label{poisson_estimate}
|P_z(\log(1-|\varphi|^2))-\log(1-|\varphi(z)|^2)|\leq
C_1\varepsilon,\ \ z\in\R^2_+.
\end{equation}

\bigskip

\noindent Let $E(1-|\varphi|^2)$ be the outer function given by
\begin{equation}\label{outer_function}
E(1-|\varphi|^2)(z)=\operatorname{exp}\Big(\int_\R\frac{-i}{x-z}\log(1-|\varphi(x)|^2)\
dx\Big),\ \ z\in\R_+^2.
\end{equation}
The estimate (\ref{poisson_estimate}) reads
\begin{equation}\label{outer_estimate}
e^{-C_1\varepsilon}\leq\frac{|E(1-|\varphi|^2)(z)|}{1-|\varphi(z)|^2}\leq
e^{C_1\varepsilon},\ \ z\in\R_+^2.
\end{equation}

\noindent Therefore, again by $(C)$ we have that
$$\frac{|\nabla\varphi(z)|}{E(1-|\varphi|^2)(z)}dA(z)$$
is a Carleson measure with Carleson norm bounded by
$C_2\varepsilon$.  Assuming that $\{z_n\}$ is a separated
sequence, the rest of the proof is fairly standard. Let $B(z)$ be
the Blaschke product with zeros $\Lambda=\{z_n\}$. Since $\{z_n\}$
is an interpolating sequence for $\Hi$, there exists $C_3>0$ such
that $|B(z)|\geq C_3$ for $z\notin\Delta(z_n,C)$. Recall that
$\varphi$ is constant on the hyperbolic discs $\Delta(z_n,C)$,
hence the measure
$$\left|\frac{\bar{\partial}\varphi(z)}{B(z)E(1-|\varphi|^2)(z)}\right|dA(z)$$
is a Carleson measure with Carleson norm smaller than
$C_4\varepsilon$.  Hence we can find a smooth function $b$ in
$\R_+^2$ which extends continuously to $\R$ with
$\|b\|_{\Li(\R)}\leq C\varepsilon$, such that
$$\bar{\partial}b(z)=\frac{2\bar{\partial}\varphi(z)}{B(z)E(1-|\varphi|^2)(z)},\ \
z\in\R^2_+.$$ See \cite[p. 311]{Garnett}. Then
$f=\varphi-2^{-1}BE(1-|\varphi|^2)b$ is an analytic function and
at almost every $x\in\R$ we have
\begin{equation*}
|f(x)|\leq|\varphi(x)|+2^{-1}(1-|\varphi(x)|^2)C\varepsilon\leq1
\end{equation*}
if $\varepsilon$ is taken sufficiently small, so that
$C\varepsilon\leq 1$. So, under the assumption that $\{z_n\}$ is a
separated sequence, we have constructed a function $f\in\B$
fulfilling the interpolation.

\bigskip

\noindent For later purposes it will be useful to state the
following fact.

\begin{obs}\label{obs_analytic}
There exists a function $f\in\B$ satisfying $f(z_n)=w_n$, for
$n=1,2\dots$ and
\begin{equation}\label{outer_analytic}
\left|E(1-|f|)(z)\right|\geq C(1-|f(z)|),\, z\in\R^2_+.
\end{equation}
Moreover, there exists a constant $\eta>0$ depending on $\{z_n\}$
such that
\begin{equation}\label{lip_analytic}
\beta(f(z),f(z_n))\leq C\varepsilon\beta(z,z_n),\ \ z\in
\Delta(z_n,\eta)
\end{equation}
for any $n=1,2,\dots$
\end{obs}
\noindent To show (\ref{outer_analytic}) notice that since
$\|b\|_\infty\leq C\varepsilon<1$ there exists a constant $C_1>0$
such that $1-|f(x)|\geq C_1(1-|\varphi(x)|)$ for any $x\in\R$.
Hence $|E(1-|f|)(z)|\geq C_2|E(1-|\varphi|)(z)|$ for any
$z\in\R^2_+$. Applying (\ref{outer_estimate}) we deduce that
$$|E(1-|f|)(z)|\geq C_3(1-|\varphi(z)|)$$
for any $z\in\R_+^2$.  Since $|\varphi(z)-f(z)|\leq
C\varepsilon(1-|\varphi(z)|)$, then (\ref{outer_analytic}) holds.

\medskip

\noindent The proof of (\ref{lip_analytic}) is more subtle and
depends on a beautiful result by P. Jones on bounded solutions of
$\bar\partial$-equations.

\begin{lma}\label{p_jones_sol}
Let $F$ be a continuous function in the upper half plane such that
$|F(z)|dA(z)$ is a Carleson measure.  Let $\{z_n\}$ be a sequence
of points in the half plane $\R^2_+$.  Assume that there exists
$\delta>0$ such that $F\equiv 0$ on $\Delta(z_n,\delta)$.  Then
there exists a function $b\in\mathcal{C}(\R^2_+)$ with
$\bar\partial b=F$ and
\begin{equation}\label{Jones_estimate}
\|b\|_{\Li(\R)}+\sup\{|b(z)|:z\in\cup \Delta(z_n,\delta/2)\}\leq
C(\delta)\||F(z)|dA(z)\|_C.
\end{equation}

\end{lma}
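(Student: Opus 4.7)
\medskip

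\noindent The statement is a variant of P.\ Jones' theorem on bounded solutions of $\bar\partial$-equations with Carleson data; the novelty over the classical version (see \cite[p.~311]{Garnett}) is the additional uniform control on the interior hyperbolic discs $\Delta(z_n,\delta/2)$, which is precisely where the vanishing hypothesis on $F$ enters. The plan is to apply Jones' explicit kernel construction to produce
$$b(z)=\frac{1}{\pi}\int_{\R^2_+} F(\zeta)\,K_F(z,\zeta)\,dA(\zeta),$$
where $K_F$ is a Cauchy kernel dressed with an exponential correction factor built from the Carleson measure $|F(\omega)|dA(\omega)$. The factor is designed so that $\bar\partial_z K_F=\pi\delta_\zeta$ and, crucially, so that for every $x\in\R$ one has $\int|K_F(x,\zeta)||F(\zeta)|dA(\zeta)\leq C\||F|dA\|_C$. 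This immediately yields $\bar\partial b=F$ together with the boundary estimate $\|b\|_{\Li(\R)}\leq C\||F|dA\|_C$.

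\medskip

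\noindent To propagate this bound to each hyperbolic disc $\Delta(z_n,\delta/2)$, I use the vanishing of $F$ on $\Delta(z_n,\delta)$. For $z\in\Delta(z_n,\delta/2)$ the representation of $b(z)$ reduces to an integral over $\R^2_+\setminus\Delta(z_n,\delta)$, on which the hyperbolic distance from $z$ to the integration variable $\zeta$ is bounded below by $\delta/2$, hence the Euclidean separation satisfies $|z-\zeta|\geq c(\delta)\max(\Ima z,\Ima\zeta)$. This is exactly the kind of separation that makes Jones' pointwise estimate $|K_F(x,\zeta)|\lesssim \Ima\zeta/|x-\zeta|^2$ go through with $x$ replaced by $z$; the same Carleson integration argument then yields $|b(z)|\leq C(\delta)\||F|dA\|_C$, which is the second half of~(\ref{Jones_estimate}). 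Note also that $\bar\partial b=F\equiv 0$ on $\Delta(z_n,\delta)$, so $b$ is automatically holomorphic there, which can be used to smooth the argument near the centers.

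\medskip

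\noindent The main obstacle is controlling Jones' exponential correction factor at interior points of $\R^2_+$, where it is not automatically bounded by $1$ as it is on the real axis: the standard estimate uses $\Ima(i F_\zeta(x))\leq 0$ for $x\in\R$, which fails in general for $z$ in the upper half plane. A clean way around this is conformal invariance: a M\"obius automorphism of $\R^2_+$ sends $z_n$ to a point that can be treated as a boundary point in a rotated Jones construction, with the Carleson norm of $|F|dA$ preserved up to constants depending only on $\delta$. Equivalently, one tracks the integrals defining the exponent directly on $\R^2_+\setminus\Delta(z_n,\delta)$ and uses the Carleson hypothesis together with the separation $|z-\zeta|\geq c(\delta)\max(\Ima z,\Ima\zeta)$ to keep them uniformly bounded by a multiple of $\||F|dA\|_C$, hence to keep the exponential factor bounded by $C(\delta)$.
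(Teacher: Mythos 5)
Your overall plan matches the paper's: start from Jones' explicit kernel $b(z)=\pi^{-1}\int(\Ima\xi)F(\xi)(\xi-z)^{-1}(z-\bar\xi)^{-1}K(\xi,z)\,dA(\xi)$, use the vanishing of $F$ on $\cup\Delta(z_n,\delta)$ to conclude that for $z\in\Delta(z_n,\delta/2)$ the integrand is supported where $|\xi-z|\geq c(\delta)|\bar\xi-z|$, and then run the Carleson estimate on the resulting Poisson-type kernel. That is exactly the separation the paper uses, and up to that point you are on the same track.

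However, your last paragraph misidentifies the obstacle and hides the crucial step. First, the exponential factor $K(\xi,z)$ is \emph{not} a new problem for interior $z$: with the kernel as written, $\Rea\int_{S(\xi)}\frac{i}{\xi-\bar w}|F(w)|\,dA(w)$ is bounded above by a universal constant for every $\xi$ (this is a Carleson/Poisson estimate that uses nothing about $z$), while $\Rea\bigl(-\int_{S(\xi)}\frac{i}{z-\bar w}|F(w)|\,dA(w)\bigr)=-\int_{S(\xi)}\frac{\Ima z+\Ima w}{|z-\bar w|^2}|F(w)|\,dA(w)\leq 0$ for \emph{every} $z\in\R^2_+$, not only for $z\in\R$. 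So no conformal trick is needed, and the suggestion to ``keep the exponential factor bounded by $C(\delta)$'' is off target. Second, and more importantly, merely bounding $|K(\xi,z)|$ by a constant would not finish the proof: the Poisson integral of a Carleson measure is not pointwise bounded (take $|F|\,dA$ to be arclength on a vertical segment $\{x_0\}\times(0,1)$), so the estimate $\int\frac{\Ima\xi}{|\bar\xi-z|^2}|F(\xi)|\,dA(\xi)\leq C$ is simply false in general. The decaying piece $\exp\bigl(-\int_{S(\xi)}\frac{\Ima w}{|\bar w-z|^2}|F(w)|\,dA(w)\bigr)$ must be retained and used, as in Jones' original proof, where the integral over $\xi$ is compared to $\int_0^\infty e^{-x}\,dx$. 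Your phrase ``the same Carleson integration argument then yields $|b(z)|\leq C(\delta)\||F|\,dA\|_C$'' glosses over precisely this point, which is the heart of the matter once the separation has tamed the prefactor $(\xi-z)^{-1}$.
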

\begin{proof}
Without loss of generality we may assume that
$\||F(z)|dA(z)\|_C=1$.  P. Jones \cite{Jones} found an explicit
solution of the $\bar\partial$-equation with uniform estimates.
This formula is
$$b(z)=\frac{1}{\pi}\int_{\R_+^2}\frac{(\Ima\xi)F(\xi)}{(\xi-z)(z-\bar\xi)}K(\xi,z)dA(\xi),$$
where
$$K(\xi,z)=\exp\left\{\int_{S(\xi)}\left(\frac{i}{\xi-\bar w}-\frac{i}{z-\bar w}\right)|F(w)|dA(w)\right\},$$
and $S(\xi)=\{w\in\R^2_+:\Ima w<\Ima \xi\}$. The estimate
$\|b\|_{\Li(\R)}\leq C$ is proved in \cite{Jones}. We will use the
same argument that we are using to estimate the second term in the
left hand side of (\ref{Jones_estimate}). Write $E=\D\setminus\cup
\Delta(z_n,\delta)$ and notice that $|\xi-z|\geq
C_1(\delta)|\bar\xi-z|$ if $\xi\in E$ and $z\in\cup
\Delta(z_n,2^{-1}\delta)$.  Hence for such points $z\in\cup
\Delta(z_n,2^{-1}\delta)$ we have
$$|b(z)|\leq C_2(\delta)\int_E\frac{(\Ima\xi)|F(\xi)|}{|\bar\xi-z|^2}|K(\xi,z)|dA(\xi).$$
Since any $w\in S(\xi)$ verifies $\Ima w> \Ima\xi$, we deduce that
$$\int_{S(\xi)}\frac{\Ima
w}{|\bar w-\xi|^2}|F(w)|dA(w)\leq
\int_{\R^2_+}\frac{\Ima\xi}{|\bar w-\xi|^2}|F(w)|dA(w)\leq C$$ for
any $\xi\in\R^2_+$. Then
$$|b(z)|\leq C_3\int_{\R^2_+}\frac{(\Ima\xi)|F(\xi)|}{|\bar\xi-z|^2}\exp\left(\int_{S(\xi)}\frac{-(\Ima w)|F(w)|}{|\bar w-z|^2}dA(w)\right)dA(\xi),$$
where $C_3=C_3(\delta)$.  Arguing as in \cite{Jones}, the integral
above compares to $\displaystyle\int_0^\infty e^{-x}dx=1$, and the
proof is completed.
\end{proof}

\noindent We now continue with the proof of (\ref{lip_analytic}).
Recall that $\varphi$ was constant on hyperbolic discs centered at
the points $\{z_n\}$ of a fixed radius.  Hence the function
$F=\bar\partial\varphi/(BE(1-|\varphi|^2))$ satisfies the
conditions of Lemma \ref{p_jones_sol}, so let $b$ be the solution
given by this Lemma. Pick $\eta\leq2^{-1}\delta$ and take $z\in
\Delta(z_n,\eta)$ for some fixed $n$.  Then $|b(z)|\leq
C_4\varepsilon$ and we deduce
$$|f(z)-\varphi(z)|\leq C_4\varepsilon|B(z)||E(1-|\varphi|)(z)|\leq C_5\varepsilon|B(z)|(1-|\varphi(z)|).$$
Since
$$|B(z)|\leq \left|\frac{z-z_n}{z-\bar z_n}\right|\leq C_6\beta(z_n,z),$$
we have that
$$\beta(f(z),\varphi(z))\leq C_7\varepsilon\beta(z_n,z).$$
Since $\beta(\varphi(z),\varphi(w))\leq C\varepsilon\beta(z,w)$
for any $z,w,\in\R^2_+$, the estimate (\ref{lip_analytic})
follows.

\subsection{Union of two separated sequences.}\label{sect_suff_2seq}
This section is devoted to proving the sufficiency of conditions
$(a)$ and $(b)$ in Theorem \ref{main_theorem}. This will end the
proof of Theorem \ref{main_theorem}.  So let
$\{z_n^{(1)}\}\cup\{z_n^{(2)}\}$ be the union of two separated
sequences verifying the density condition $(b)$. Pick a number
$\delta>0$ smaller than the quantifier $\eta$ appearing in
(\ref{lip_analytic}) as well as smaller than the separation
constant of the sequence $\{z_n^{(1)}\}$.  Adding the points in
$\{z_n^{(2)}\}\setminus\cup \Delta(z_n^{(1)},\delta)$ to the
sequence $\{z_n^{(1)}\}$, we can assume that the sequence
$\{z_n^{(2)}\}$ is contained in $\cup \Delta(z_n^{(1)},\delta)$.
So for each $z_k^{(2)}\in\{z_n^{(2)}\}$ there is a point in
$\{z_n^{(1)}\}$, denoted by $z_{n(k)}^{(1)}$, with
$\beta(z_{n(k)}^{(1)},z_k^{(2)})\leq\delta$.  Let $B$ be the
Blaschke product with zeros $\{z_n^{(1)}\}$.  Since
$\{z_n^{(1)}\}$ is a separated sequence which satisfies the
density condition $(b)$ in Theorem \ref{main_theorem}, it is an
interpolating sequence for $\Hi$. Hence there exists a constant
$C_1>0$ such that
$$B(z_k^{(2)})\geq C_1\rho(z_{n(k)}^{(1)},z_k^{(2)}),\ \ k=1,2,\dots,$$
where $\rho(a,b)=|a-b|/|a-\bar b|$ is the pseudohyperbolic
distance in $\R_+^2$.  Now let $\{w_n^{(1)}\}\cup\{w_n^{(2)}\}$ be
a sequence of values in the unit disc that satisfy the
compatibility condition (\ref{compatibility_condition}).  Then
there exists $f\in\B$ with $f(z^{(1)}_n)=w^{(1)}_n$, for
$n=1,2\dots$, and satisfying the conditions given in Remark
\ref{obs_analytic} for the sequence $\{z_n^{(1)}\}$. However,
since $\beta(z_{n(k)}^{(1)},z_k^{(2)})\leq\delta$ for any
$k=1,2,\dots$, we can assume that conditions
(\ref{outer_analytic}) and (\ref{lip_analytic}) hold for the whole
sequence $\{z_n^{(1)}\}\cup\{z_n^{(2)}\}$, once the constant $C$
is replaced by other absolute constants.  Notice that estimate
(\ref{outer_analytic}) gives
$$\left|\frac{w_n^{(2)}-f(z_n^{(1)})}{B(z_n^{(2)})E(1-|f|)(z_n^{(2)})}\right|\leq C_2\frac{\rho(w_n^{(2)},w_n^{(1)})}{\rho(z_n^{(2)},z_n^{(1)})},\ \ n=1,2,\dots$$
Since $\beta(a,b)$ is comparable to $\rho(a,b)$ whenever
$\beta(a,b)\leq1$, the compatibility condition
(\ref{compatibility_condition}) yields
$$\sup_n\left|\frac{w_n^{(2)}-f(z_n^{(1)})}{B(z_n^{(2)})E(1-|f|)(z_n^{(2)})}\right|\leq C_3\varepsilon.$$
Also, using (\ref{lip_analytic}) instead of the compatibility
condition, the argument above tells that
$$\sup_n\left|\frac{f(z_n^{(2)})-f(z_n^{(1)})}{B(z_n^{(2)})E(1-|f|)(z_n^{(2)})}\right|\leq C_4\varepsilon.$$
So the values
$$w_n^*=\frac{w_n^{(2)}-f(z_n^{(2)})}{B(z_n^{(2)})E(1-|f|)(z_n^{(2)})},\ \ n=1,2,\dots$$
satisfy $\sup_n|w_n^*|\leq 2C_5\varepsilon$.  Since
$\{z_n^{(2)}\}$ is a separated sequence satisfying the density
condition $(b)$ in Theorem \ref{main_theorem}, it is an
interpolating sequence for $\Hi$. Hence, fixing $\varepsilon>0$
sufficiently small there exists $g\in\B$ such that
$g(z_n^{(2)})=w_n^*$, for $n=1,2,\dots$ Then the function
$$h=f+BgE(1-|f|)$$
is in $\B$ and will interpolate the whole sequence of values
$\{w_n^{(1)}\}\cup\{w_n^{(2)}\}$ at the whole sequence
$\{z_n^{(1)}\}\cup\{z_n^{(2)}\}$. This ends the proof of Theorem
\ref{main_theorem}.

\subsection{Remark.} It is worth mentioning that if $\{z_n\}$ is a sequence
that verifies $(a)$ and $(b)$ in Theorem \ref{main_theorem} and
$\{w_n\}$ is a sequence of values in the unit disc which satisfy
the compatibility condition
$$\beta(w_n,w_m)\leq\varepsilon\beta(z_n,z_m),\ \ n,m=1,2,\dots$$
our construction provides a non extremal point $f$ in $\B$ with
$f(z_n)=w_n$ for $n=1,2,\dots$  Then applying a refinement of
Nevanlinna's theorem due to A.~Stray \cite{Stray} we may find a
Blaschke product $I$ such that $I(z_n)=w_n$, for $n=1,2,\dots$

\bibliographystyle{plain}
\bibliography{paper}

\end{document}